\numberwithin{equation}{section}   
\newtheorem{theorem}{Theorem}[section]
\newtheorem{proposition}[theorem]{Proposition}
\newtheorem{lemma}[theorem]{Lemma}
\newtheorem{corollary}[theorem]{Corollary}
\theoremstyle{definition}
\newtheorem{definition}[theorem]{Definition}
\newtheorem{example}[theorem]{Example}
\newtheorem{remark}[theorem]{Remark}
\newcommand{\R}{{\mathbb{R}}}
\newcommand{\Z}{{\mathbb{Z}}}
\DeclareMathOperator{\id}{id}
\def \g {\mathfrak{g}}
\def \a {\mathfrak{a}}
\def \k {\mathfrak{k}}
\def \zg {\mathfrak{z}(\mathfrak{g})}
\newcommand{\Og}[1][]{\Omega^{#1}}
\newcommand{\Ogtot}[1][]{\Omega^{#1}_{\rm tot}}
\newcommand{\Otg}[1][]{\Omega^{#1}_{\rm tot}(\gm)}
\newcommand{\OtGH}[1][]{\Omega^{#1}_{\rm tot}([G\rto[i] H])}
\newcommand{\Ng}[1][]{N_{#1}\gm}
\renewcommand{\ker}{\operatorname{Ker}}
\DeclareMathOperator{\coker}{Coker}
\DeclareMathOperator{\im}{Im}
\newcommand{\gto}[1][]{\xrightarrow[]{#1}}
\newcommand{\rto}[1][]{\xrightarrow[]{#1}}
\newcommand{\toto}{\rightrightarrows}
\newcommand{\Hom}[1][]{\mathrm{Hom}_{\raise1.5ex\hbox to.1em{}#1}}
\renewcommand{\hom}[1][]{{\mathcal{H}om}_{\raise1.5ex\hbox to.1em{}#1}}
\newcommand{\Aut}[1][]{\mathrm{Aut}_{\raise1.5ex\hbox to.1em{}#1}}
\newcommand{\Out}[1][]{\mathrm{Out}_{\raise1.5ex\hbox to.1em{}#1}}
\newcommand{\ooplus}{\mathop{\bigoplus}\limits}
\newcommand{\com}{{\scriptscriptstyle\bullet}}
\newcommand{\rr}{{\mathbb R}}
\newcommand{\ba}{\begin{array}}
\newcommand{\ea}{\end{array}}
\newcommand{\bnum}{\begin{enumerate}[{\rm(i)}]}
\newcommand{\enum}{\end{enumerate}}
\newcommand{\banum}{\begin{enumerate}[{\rm(a)}]}
\newcommand{\eanum}{\end{enumerate}}
\newcommand{\eq}{\begin{eqnarray}}
\newcommand{\eneq}{\end{eqnarray}}
\newcommand{\eqn}{\begin{eqnarray*}}
\newcommand{\eneqn}{\end{eqnarray*}}
\def \zz {\mathbb{Z}}
\def \ddR {d_{\rm dR}}
\def \dpa {\partial }
\def \gm {\Gamma}
\begin{document}  

\title{Cohomology of  Lie $2$-groups}    
  \date{}
\author{{Gr\'egory Ginot,}\cr
{\small{Institut math\'ematique de Jussieu}}\cr
{\small{Universit\'e Pierre et Marie Curie}}\cr
{\small{4, place Jussieu, 75252  Paris, France}}\cr
{\small{e--mail: ginot@math.jussieu.fr}}\cr
{~}\cr
{Ping Xu\thanks{Research partially supported by NSF
grant DMS-0605725 \&  NSA grant H98230-06-1-0047},}\cr
{\small{Department of Mathematics}}\cr
{\small{Penn State University}}\cr
{\small{University Park, PA 16802, U.S.A.}}\cr
{\small{e--mail: ping@math.psu.edu}}\cr
{~}\cr}

\maketitle

\begin{abstract}  In this paper we study the cohomology of (strict) Lie 2-groups.
 We obtain an explicit Bott-Shulman type map in the case of a Lie 2-group corresponding to the crossed module $A\gto 1$.
 The cohomology of the Lie 2-groups corresponding to the universal crossed modules $G\gto \Aut(G)$ and $G\gto \Aut^+(G)$  is the abutment of a spectral sequence involving the cohomology of $GL(n,\Z)$ and $SL(n,\Z)$. When the dimension of the center of $G$ is less than 3, 
we compute explicitly these cohomology groups. 
 We also compute the cohomology of the Lie 2-group corresponding to a crossed module $G\gto[i] H$ for which $\ker(i)$ is compact and $\coker(i)$ is connected, simply connected and compact 
and apply the result to the {\it string} $2$-group.
\end{abstract}    

\section{Introduction}

This paper is devoted to the study of Lie  2-group cohomology.
A Lie 2-group is a Lie groupoid $\Gamma_2\toto \Gamma_1$,
 where both the space of objects $\Gamma_1$ and
the space of   morphisms are Lie groups and
all  the groupoid  structure maps are group morphisms.
This is what is usually refer to as ``groupoids over
groups".  
It is well known that Lie  2-groups
are  equivalent to crossed modules~\cite{BrHi, BaLa}.
By a  {\em crossed module},
we mean   a  Lie group morphism  $G\rto[{i}] H$ 
together with a right action  of $H$ 
on $G$ by automorphisms satisfying certain compatibility conditions.
In this case, $\ker i$ is called the kernel, while
$H/i(G)$ is called cokernel of the crossed module.

Lie 2-groups arise naturally
in various  places in mathematical physics, for
instance, in  higher gauge theory \cite{BaSc}.
They also appeared in the theory  of non-abelian gerbes.
As was shown by Breen \cite{Breen, Br4} (see also \cite{GiSt}), 
a $G$-gerbe is equivalent to
a 2-group  principal bundle in the sense of Dedecker \cite{Dedecker},
where the structure 2-group is the one corresponding to
the crossed module $G\rto[i]\Aut(G)$ with  $i$ denoting  the
 map to the inner automorphisms.

As in the  1-group case, 
associated to  any Lie 2-group $\gm$, there is a simplicial manifold  $N_\com\gm$, called the nerve
of the 2-group. 
Thus one defines the cohomology of a 
Lie 2-group $\gm$ with trivial coefficients  $\rr$ as the
cohomology of this simplicial manifold  $N_\com\gm$ with coefficients $\rr$.
 The latter  can be computed using a double 
 de Rham  chain complex.
 A very natural question arises as to whether there is a Bott-Shulman type map \cite{BS, BSS}
for  such a Lie 2-group. 
Unfortunately, the  answer seems to be out of reach in general. 
However, we are able to describe a class of cocycles
in $\Omega^{3r}([G\rto[i] H])$ generated by elements in
$ S^{}\big((\g^*)^{\g,H}[3]\big)$, the symmetric algebra on the vector space $\big((\g^*)^{\g,H}\big)$ with degree 3. 
Here we denote by $[G\gto[i] H]$  the Lie $2$-group corresponding to the crossed module $G\gto[i] H$.
As a consequence, we explicitly describe, for any abelian group
$A$, cocycles in $\Omega^{\com}([A\gto 1])$ which generate the
cohomology group $H^\com ([A\gto 1])$. 
These cocycles are given by skew-symmetric polynomial functions on the Lie algebra $\a$ of $A$. 
Such an explicit
 map is also obtained in the case  when the
cokernel of $G\gto H$ is finite.
 Our approach is based on the following idea.
 A Lie 2-group $[G\gto H]$ induces a short exact sequence of Lie 2-groups:
$$1\gto{} [\ker i \gto 1]\gto{} [G\gto H] \gto{} [1\gto \coker i]\gto{} 1$$
which in turn induces a fibration of 2-groups.
As a consequence, we obtain  a Leray-Serre spectral sequence.
Discussions
on these topics occupy Sections 4-5.

We also use the spectral sequence to compute the cohomology of a $2$-group $[G \gto[i] H]$ with connected and simply connected compact cokernel $\coker(i)\cong H/i(G))$ and compact kernel $\ker(i)$. 
In general, the cohomology of $[G \gto[i] H]$ depends on a transgression homomorphism $$T:H^3([\ker(i)\gto 1])\gto H^4([1\gto H/i(G)]).$$ 
An example of such $2$-group is given by the string $2$-group~\cite{BCSS} for which we recover computations also idependently due to Baez and Stenvenson~\cite{BaSt}. 

Next we apply our result to study the cohomology of  particular classes
of 2-groups: $[G\gto[i^+] \Aut^+ (G)]$ and
$[G\gto[i] \Aut (G)]$, where $\Aut^+ (G)$ is the
orientation preserving automorphism group of $G$. 
 If $G$ is a semi-simple Lie
group, the result is  immediate since both kernel and
cokernel are finite groups.
However when $G$ is a  general compact
Lie group, the situation becomes
much subtler. 
This is due  to the  fact that  the connected
component of the center $Z(G)$  is a torus $T^n$,
 and therefore $\Out^+ (G)$ and $\Out (G)$ are no longer finite groups.
Indeed they are closely  related to
$SL (n, \zz)$ and $GL(n, \zz)$, 
whose cohomology groups are in general very difficult to compute, and
 still remains an open question for large $n$. 
Nevertheless, we obtain a spectral sequence involving
cohomology of these groups,  converging to the cohomology of the 2-group.
For $n\leq 3$, using a result of Soul\'e \cite{Sou},   we are able to
compute the cohomology groups explicitly.

 One of the  main motivations for studying cohomology of 
 2-groups is to study characteristic classes of gerbes. 
Since $G$-gerbes correspond to principal
 $[G\gto[i] \Aut(G)]$-bundles, 
   any nontrivial cohomology class in $H^\bullet([G\gto[i] \Aut(G)])$ defines a universal characteristic class for $G$-gerbes. 
And  a Bott-Shulman type cocycle allow one to express such a 
   universal characteristic class in terms of geometric data such as connections
   just like in the usual Chern-Weil theory.
  This will be  discussed in detail  in~\cite{GiSt}.

  Note  that the constructions in this paper can be defined in the more general context of weak Lie 2-groups
   as defined by Henriques in~\cite{Hen} since  the cohomology and homotopy groups are defined using the nerve.

\noindent{\bf Acknowledgment:} The authors would like to thank
 A.~Ash, L. Breen, A. Henriques, K. Mackenzie,  C.~Soul\'e,  J. Stasheff
and the referee
 for many useful comments and  suggestions.

\medskip

\noindent{\bf Notations :} Given a (graded) vector space $V$ we denote by $V [k]$ the graded vector space with shifted grading $\left(V[k]\right)^n=V^{n-k}$. Thus if $V$ is concentrated in degree $0$, $V[k]$ is concentrated in degree $k$. 
The graded symmetric (or free commutative) algebra  on a graded vector space $V$ will be denoted by $S^{}(V)$. We write $S^{}(V)^q$ for the subspace of homogeneous elements of total degree $q$, that is,  $S^{}(V)^q=\{x_1\dots x_r \in S^{r}(V) \, / \, r\geq 0 \mbox{ and } |x_1|+\cdots+|x_r|=q \}$.  
 In particular, if $x\in S(V)^{p}$ and $y\in S(V)^{q}$, on has $x\cdot y= (-1)^{pq}y\cdot x$. 
Thus if $V$ is concentrated in even degrees, $S(V)$ is a polynomial algebras.
On the other hand,  if $V$ is  concentrated in odd degrees,
 $S(V)$ is an exterior algebra.

\smallskip

Unless otherwise stated, all cohomology groups are taken with real coefficients.

\section{Crossed modules}\label{sec:Crossed}

 A {\em crossed module}  of Lie groups is a  Lie group morphism  $G\rto[{i}] H$ 
together with a right $H$-action $(h,g)\gto g^{h}$ of $H$ on $G$ by Lie group automorphisms satisfying:
\begin{enumerate}
\item for all $(h,g)\in G\times H$,
  $i(g^{h})=h^{-1}i(g)h$;
\item for all $(x,y)\in G\times  G$, $x^{i(y)}=y^{-1}xy$.
\end{enumerate}
 A (strict) morphism $\big(G_2\rto[i_2] H_2\big) \rightarrow \big(G_1 \rto[i_1] H_1\big)$ of crossed modules is a pair $(\phi:G_2\rto G_1, \psi: H_2\rto H_1)$ of Lie group morphisms  such that $\psi \circ i_2=i_1\circ \phi$ and $\phi(g)^{\psi(h)}=\phi(g^h)$ for all $g\in G_2$, $h\in H_2$.

\smallskip

 There is a well known equivalence of categories between the category of crossed modules and the category of (strict) Lie $2$-groups~\cite{BrHi}.
 Recall that a Lie $2$-group is a group object in the category of Lie groupoids meaning it is a Lie groupoid $\gm_2\toto \gm_1$ where, both, $\gm_2$ and $\gm_1$ are Lie group and all structure maps are Lie group morphisms. 
Such a $2$-group will be denoted by $\gm_2\toto\gm_1\toto \{*\}$. 
The crossed module $G\rto[{i}] H$ gives rise to the $2$-group $G\rtimes H \toto H\toto \{*\}$. 
The groupoid  $G\rtimes H \toto H$ is the transformation groupoid:  the source and target maps $s,t: G\rtimes H \gto H $   are respectively given by $s(g,h)=h$, $t(g,h)= h\cdot i(g)$. 
The (so called vertical) composition is $(g,h)\star (g',h\cdot i(g))=(gg',h)$. 
 The group structure on $H$ is the usual one while the group structure (the so called horizontal composition) on $G\rtimes H$ is the semi-direct product of Lie groups:   $(g,h)*(g',h')=(g^{h'}g',hh')$.   
Conversely, there is a crossed module associated to any Lie $2$-group~\cite{BrHi}.
In the sequel we make no distinctions between crossed modules and $2$-groups.
We use the short notation
$[G\stackrel{i}\longrightarrow H]$ for the Lie $2$-group corresponding to a crossed module $G\gto[i] H$.
 
 \smallskip
 
\begin{definition}\label{D:kernel} Let $(\phi,\psi):\big(G_2\rto[i_2] H_2\big) \rightarrow \big(G_1 \rto[i_1] H_1\big)$ be a morphism of crossed modules with  $\psi$ being a submersion. 
The {\em kernel} of the map $(\phi,\psi)$ is, by definition (see~\cite{Noo}), the crossed module $(G_2\rto[\tilde{i}] H_2\times_{H_1} G_1)$ where $\tilde{i}$ is the natural group morphism induced by $i_2$ and $\phi$. 
The $H_2\times_{H_1} G_1$-action on $G_2$ is induced by the $H_2$-action: $g_2^{(h_2,g_1)}=g_2^{h_2}$. 
 The structure map $H_2\times_{H_1} G_1\gto H_2$ induces a natural crossed module morphism $(G_2\rto[\tilde{i}] H_2\times_{H_1} G_1)\gto \big(G_2\rto[i_2] H_2\big)$.
 \end{definition}
\medskip

A Lie group $G$ can be seen as a Lie 2-group with trivial 2-arrows, {\em i.e.}  as the Lie $2$-group $G\toto G\toto \{*\}$. The associated crossed module is $1\rto  G$. 
It yields an embedding of the category of Lie groups in the category of Lie 2-groups.  
Similar to the case of a group, associated to a Lie $2$-group $\gm: \gm_2\toto \gm_1\toto \{*\}$, there is a simplicial manifold $N_\com \gm$, called its (geometric) nerve. 
 It is the nerve of the underlying  2-category as defined by Street~\cite{Str}.  
In particular, $N_0\gm=\{*\}$, $N_1\gm=\gm_1$ and $N_2\gm$ consists of 2-arrows of $\gm_2$ fitting in a commutative square:
\eq\label{eq:N3}\xymatrix{ & A_1\ar[rd]^{f_0} \ar@{=>}[d]^{\alpha}  &  \\
 A_0\ar[rr]_{f_1} \ar[ru]^{f_2} &&A_2 }
\eneq
$N_2 \gm$ is naturally a submanifold of $\gm_2\times \gm_1\times \gm_1\times \gm_1$. For $p\geq 3$, an element of $N_p\gm$ is a   $p$-simplex (labelled by arrows of $\gm$) such that each subsimplex of dimension $3$ is a commutative 
tetrahedron,
 whose   faces are  given by elements of $N_2\gm$ (see~\eqref{eq:tetrahedron} below or~\cite{Noo, MoSv, Str}).
Also see Remark~\ref{rm:Nerve} below.

\smallskip

 The nerve $N_\com$ defines a functor from the category of Lie $2$-groups to the category of simplicial manifolds. 
The nerve of a Lie group considered as a Lie $2$-group is isomorphic to  the usual (1-)nerve~\cite{Seg}. Taking the fat realization of the nerve defines a functor from Lie $2$-groups to topological spaces.
 In particular, the homotopy groups of a Lie $2$-group can be defined as the homotopy groups of its nerve.

\medskip

 Note that Lie $2$-groups embed evidently in the category of weak Lie $2$-groupoids (see for instance~\cite{BaLa} and~\cite{Hen}).  
There is a notion of fibration for (weak) Lie $2$-groups 
due to Henriques ~\cite[Section 2 and 4]{Hen} (see also 
\cite{Zhu, Zhu2} as well). 
We    also refere the reader to ~\cite{MoSv, Noo} for
an  excellent exposition in the case of discrete $2$-groups.
 In the present paper, however, we only use a special kind of fibrations,
 which is given by the following lemma:  

\begin{lemma}\label{lm:fibration}
Let $(\phi,\psi):\big(G_2\rto[i_2] H_2\big) \rightarrow \big(G_1 \rto[i_1] H_1\big)$ be a morphism of crossed modules with $\phi$ and $\psi$ being surjective submersions. 
Then $(\phi,\psi):[G_2\rto[i_2] H_2] \rightarrow [G_1 \rto[i_1] H_1]$ is a fibration of Lie $2$-groups. 
The kernel of the morphism $(\phi,\psi)$ (as in Definition~\ref{D:kernel}), \emph{i.e.}  the  Lie 2-group  $[G_2\rto[\tilde{i}] H_2\times_{H_1} G_1]$,  is a homotopy fiber of $(\phi,\psi)$ and is equivalent to $[ker(\phi)\rto[i_2] ker(\psi)]$. 
\end{lemma}
\begin{proof} Let $\gm_1$ and $\gm_2$ be the Lie 2-groups corresponding to the crossed modules   $(G_1 \rto[i_1] H_1)$ and $(G_2\rto[i_2] H_2)$ respectively,   and $\Phi:\gm_2\gto \gm_1$ the map induced by $(\phi,\psi):\big(G_2\rto[i_2] H_2\big) \rightarrow \big(G_1 \rto[i_1] H_1\big)$. 
Since $\phi$ and $\psi$ are surjective submersions, $N_m\Phi: N_m\gm_2\gto N_m\gm_1$ is a surjective submersion for all $m$.
 Since $\gm_2$ and $\gm_1$ are (strict) Lie 2-groups, their nerves $N_\com \gm_2$ and $N_\com \gm_1$ are simplicial manifolds  satisfying the Kan condition  for simplicial manifolds as in~\cite[Definiton 1.2 and Definition 1.4]{Hen}. 
 Thus, for all $m,j$, the canonical map $N_m\gm_2=Hom(\Delta^m_\com, N_\com \gm_2) \gto Hom(\Lambda[m,j]_\com, N_\com \gm_2)$ are surjective submersions for $m\leq 2$ and diffeomorphisms for $m>2$.
 Here $\Delta^m_\com$ is the simplicial $m$-simplex and $\Lambda[m,j]_\com$ its $j$th-horn, \emph{i.e.}, the subcomplex generated by all facets containing the $j$th-vertex. The same results holds when $\gm_2$ is replaced by $\gm_1$. 

The   map $N_m\gm_2=Hom(\Delta^m_\com, N_\com \gm_2) \gto Hom(\Lambda[m,j]_\com, N_\com \gm_2)$ and  the map $N_m\gm_2 =Hom(\Delta^m_\com, N_\com \gm_2) \gto Hom(\Delta^m_\com, N_\com \gm_1)=N_m\gm_1$ induced by $\Phi:\gm_2\gto \gm_1$ yields, 
for all $j$, a smooth map from $N_m\gm_2$ to the space $C[m,j]$, which consists of the commutative squares \begin{eqnarray*}  \xymatrix{\Lambda[m,j]_\com  \ar[r] \ar[d] & N_\com \gm_2\ar[d]^{N_\com \Phi}\\ \Delta^m_\com \ar[r] &  N_\com \gm_1\,}\end{eqnarray*} See~\cite[Definition 2.3]{Hen}. 
Note that $C[m,j]$ can be identified with with the fiber product $ Hom(\Lambda[m,j]_\com, N_\com \gm_2)\times_{Hom(\Lambda[m,j]_\com, N_\com \gm_1)} Hom(\Delta^m_\com, N_\com \gm_1)$.
 By the definition of a fibration~\cite[Definition 2.3]{Hen}, it suffices to prove that (for all $m,j$) the map $N_m\gm_2 \gto C[m,j]$ is a surjective submersion .  
For $m>2$, $Hom(\Lambda[m,j]_\com, N_\com \gm_2)\cong N_m\gm_2$. 
Thus $C[m,j]\cong N_m\gm_2$ and we are done.
 For $m=1$, $C[1,j]\cong H_1$ and the map $N_1\gm_2\cong H_2\gto C[1,j]\cong H_1$ is $\psi$. 
For $m=2$, $C[2,j]$ is identified with $H_2^{\times 2} \times G_1$ and the map    $N_2\gm_2\gto C[2,j]$becomes  $H_2^{\times 2} \times G_2 \stackrel{id^{\times 2}\times \phi}\longrightarrow H_2^{\times 2} \times G_1$. 
The latter  is a surjective submersion since $\phi$ is a submersion.

The fiber $F_\com$ of $N_\com \Phi$ is the pullback $pt_\com \times_{N_\com \gm_1} N_\com \gm_2$ where $pt_\com=N_\com[1\gto 1]$ is the point (viewed as a constant simplicial manifold). Thus, $F_\com$ is the nerve of the Lie 2-groups $[ker(\phi) \stackrel{i_2}\gto ker(\psi)]$.
 Here the crossed module structure  of $ker(\phi) \stackrel{i_2}\gto ker(\psi)$ is induced by that of $G_2\rto[i_2] H_2$. 
The inclusions $ker(\phi)\hookrightarrow G_2$ and  the map $ker(\psi) \hookrightarrow H_2 \rto[id\times 1] H_2\times_{H_1} G_1$ yield a crossed module homomorphism $\big(ker(\phi) \stackrel{i_2}\gto ker(\psi)\big) \gto \big( G_2\rto[\tilde{i}] H_2\times_{H_1} G_1\big)$, which is an equivalence of crossed modules. See~\cite{BaLa, MoSv, Noo, Zhu, GiSt} for the definition of equivalence of crossed modules or Lie $2$-groups. 
It follows that $N_\com [G_2\rto[\tilde{i}] H_2\times_{H_1} G_1]$ is weakly homotopic to $F_\com$. Furthemore the following natural diagram
$$\xymatrix{ [G_2\rto[\tilde{i}] H_2\times_{H_1} G_1] \ar[rd] & \\ 
[ker(\phi) \stackrel{i_2}\gto ker(\psi)] \ar[u] \ar[r] & [G_2\rto[i_2] H_2]} $$
 is commutative. 
Thus $[G_2\rto[\tilde{i}] H_2\times_{H_1} G_1]$ is a homotopy fiber of the map $(\phi,\psi):[G_2\rto[i_2] H_2] \rightarrow [G_1 \rto[i_1] H_1]$.
\end{proof}

As far as  this paper is concerned,  it is sufficient
to consider Lemma~\ref{lm:fibration} as
 a definition of a fibration of Lie $2$-groups. In particular all fibrations of Lie 2-groups in this paper arise as in Lemma~\ref{lm:fibration}.
That is, they are   induced by a  morphism $(\phi,\psi)$ of crossed modules 
with both $\phi$ and $\psi$ being  surjective submersions. 

\begin{example} The main examples of interest in this paper are obtained as follows (see Section~\ref{sec:cohomology}).
 Let $G\rto[i] H$ be a crossed module and $\psi: H\gto K$ be a Lie group morphism such that $\psi (i(g))=1$ for all $g\in G$.
 Then the map $(1,\psi):[G\rto[i] H]\rightarrow [1\gto K]$ is a map of 2-groups and it is a fibration if $\psi$ is a surjective submersion. 
The kernel of the map $(1,\psi)$ (as defined in Definition~\ref{D:kernel}) is   the Lie 2-group $[G\rto[i] ker(\psi)]$, which is equal   to the Lie 2-group $[ker(1)\rto[i] ker(\psi)]$.
\end{example}

\begin{remark} Let us recall that a 2-group is a group object in the category of groupoids.
 Then the Lie 2-group $[G_2\rto[\tilde{i}] H_2\times_{H_1} G_1]$ is the (weak) fiber product (of Lie groupoids, see~\cite{MoMr}) $[1\gto 1] \times_{[G_1 \rto[i_1] H_1]} [G_2 \rto[i_2] H_2]$. 
In particular it is the correct fiber product to look at if one is interested in group stacks rather than Lie 2-groups.
\end{remark}

\section{Cohomology of  Lie $2$-groups}\label{sec:cohomology}

The de Rham cohomology groups  of a Lie 2-group $\gm$ are defined as the cohomology groups of the bicomplex $(\Omega^\com(N_\com \gm), \ddR, \dpa)$, where $\ddR:\Omega^p(N_q \gm)\gto\Omega^{p+1}(N_q \gm) $ is de Rham differential and $\dpa: \Omega^p(N_q \gm) \gto
\Omega^p(N_{q+1} \gm)$ is induced by the simplicial structure on $N_\com \gm$ : $\dpa=(-1)^p\sum_{i=0}^{q+1} (-1)^i d_i^*$ where $d_i:N_\com \gm \gto N_{\com-1} \gm$ are the face maps. 
We use the shorter notation $\Otg[\com]$ for the associated total complex. 
Hence $\Otg[n]=\ooplus_{p+q=n} \Og[p](\Ng[q])$ with (total) differential $\ddR+\dpa$.  
We denote by $H^\com(\gm)$ the cohomology  of $\gm$.  
It is well-known that $H^\com(\gm)$ is naturally isomorphic to the cohomology of the fat realization of its nerve $N_\com \gm$ (for instance see~\cite{BSS}). 

\smallskip

The simplicial structure of the nerve $\Ng[\com]$ of a Lie $2$-group $\gm$ gives  rise to a structure of cosimplicial algebra on the space of de Rham forms $\Otg[\com]$. 
Thus, there exists an associative cup-product $\cup: \Otg[\com]\otimes \Otg[\com] \gto \Otg[\com]$  making $(\Otg[\com], \ddR+\dpa, \cup)$ into a differential graded algebra and, therefore, $(H^{\com}(\gm),\cup)$ is a graded commutative algebra. The same holds for singular cohomology.

\smallskip

If $G\rto[i] H$ is a crossed module,  
we denote by $\Ogtot[\com]([G\rto[i] H])$ the total complex of the corresponding Lie $2$-group. A map of Lie 2-groups  $f:\gm\gto G$ induces a simplicial map $N_\com \gm \gto N_\com G$, and  by pullback, a map of cochain complexes
$\Omega^\com(N_\com G)\rto[{f^*}] \Omega^\com(N_\com \gm)$. 
 A similar construction, replacing the de Rham forms by the singular cochains with coefficient in a ring $R$, yields the singular cochain functor $C^\com([G\rto[i] H],R)$ of the Lie $2$-group $[G\rto[i] H]$ whose cohomology $H^\com([G\rto[i] H],R)$ is the singular cohomology with coefficients in $R$. 
If $R=\R$, the singular cohomology groups coincides with the de Rham cohomology groups.  
 The cohomology of a  Lie group considered as a Lie $2$-group is the usual cohomology of its classifying space since, in that case, the $2$-nerve is isomorphic to the $1$-nerve of the Lie group~\cite{Str}.

\medskip
Given a crossed module $G\gto[i] H$ of Lie groups, $i(G)$ is a normal subgroup of $H$. Hence, the projection $H\gto H/i(G)$ induces a Lie $2$-group morphism
\eq
\label{eq:coker} [G\rto[i] H] \longrightarrow [1\gto H/i(G)]
\eneq 
which is a fibration (by Lemma~\ref{lm:fibration}) with the fiber being the 2-group $[G \rto[i] i(G)]$.  The canonical morphism of crossed modules $(\ker(i) \rto 1)\gto (G \rto[i] i(G))$ is an equivalence (see~\cite{BaLa, MoSv, Noo, Zhu, GiSt} for the equivalence of crossed modules or Lie $2$-groups) and in particular, the Lie 2-group $[G \rto[i] i(G)]$ and $[\ker(i) \gto[] 1]$ have weakly homotopic nerves.
It follows that there is a Leray-Serre spectral sequence.
\begin{lemma}\label{lm:Leray}
There is a converging spectral sequence of algebras
\eq \label{eq:Leray} L_2^{p,q}=H^p([1\gto H/i(G)], \mathcal{H}^q([\ker(i) \rto 1])) \Longrightarrow  H^{p+q}([G\rto[i] H]) \eneq
where $\mathcal{H}^q([\ker(i) \rto 1])$ is the de Rham cohomology viewed as a local coefficient system on $[1\gto H/i(G)]$.
\end{lemma}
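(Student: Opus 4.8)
The plan is to deduce the statement from the classical multiplicative Leray--Serre spectral sequence of a topological fibration, obtained by passing to fat geometric realizations of nerves.

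First I would record that the map of \eqref{eq:coker}, $[G\rto[i] H]\longrightarrow[1\to H/i(G)]$, is genuinely a fibration of Lie $2$-groups with fiber $[\ker(i)\rto 1]$: this follows from Lemma~\ref{lm:fibration} applied to the morphism of crossed modules $\big(G\rto[i] H\big)\rightarrow\big(1\to H/i(G)\big)$ whose two components are the (trivially surjective-submersive) map $G\to 1$ and the quotient submersion $H\to H/i(G)$. Next I would apply the nerve functor $N_\com$ followed by fat realization, invoking the key input of~\cite{Hen} (with~\cite{Noo, MoSv} for the discrete case): fat realization sends a fibration of (weak) Lie $2$-groups to a Serre fibration of topological spaces. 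This yields a fibration sequence
\[ |N_\com[\ker(i)\rto 1]|\ \longrightarrow\ |N_\com[G\rto[i] H]|\ \longrightarrow\ |N_\com[1\to H/i(G)]| , \]
whose base $|N_\com[1\to H/i(G)]|$ is a model for the classifying space $B(H/i(G))$, hence path connected with fundamental group $\pi_0(H/i(G))$.

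I would then quote the Leray--Serre spectral sequence of this fibration with real coefficients; since the Serre spectral sequence is multiplicative, it is a spectral sequence of algebras
\[ E_2^{p,q}=H^p\big(|N_\com[1\to H/i(G)]|;\,\mathcal{H}^q(\mathrm{fiber})\big)\ \Longrightarrow\ H^{p+q}\big(|N_\com[G\rto[i] H]|\big), \]
where $\mathcal{H}^q(\mathrm{fiber})$ is the local coefficient system on the base with stalk $H^q$ of $|N_\com[\ker(i)\rto 1]|$ and monodromy the $\pi_0(H/i(G))$-action. Finally I would translate the three terms into the notation of the statement using Section~\ref{sec:cohomology}: for any Lie $2$-group $\gm$ there is a natural algebra isomorphism $H^\com(\gm)\cong H^\com(|N_\com\gm|)$, and over $\R$ singular and de Rham cohomology coincide; thus the abutment is the cup-product algebra $H^{p+q}([G\rto[i] H])$, the stalk $H^q([\ker(i)\rto 1])$ is the de Rham cohomology of the $2$-group $[\ker(i)\rto 1]$, and $E_2^{p,q}=H^p([1\to H/i(G)];\mathcal{H}^q([\ker(i)\rto 1]))$ is the cohomology of $[1\to H/i(G)]$ with coefficients in that local system. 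This is precisely \eqref{eq:Leray}.

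The one step that is not essentially formal is the first one --- that realization of nerves carries a fibration of $2$-groups to an honest Serre fibration --- and this is the main obstacle, precisely where we rely on~\cite{Hen}. As a hands-on substitute one could instead verify directly that the simplicial map $N_\com[G\rto[i] H]\to N_\com[1\to H/i(G)]$ is, in each simplicial degree, a locally trivial fiber bundle (transparent at least when $i(G)$ is closed in $H$), so that its fat realization is a quasi-fibration over the connected base $B(H/i(G))$ and hence a Serre fibration up to homotopy, with homotopy fiber the realization of the levelwise fiber $N_\com[G\rto[i] i(G)]$; the latter is equivalent to $|N_\com[\ker(i)\rto 1]|$ since the crossed modules $\big(G\rto[i] i(G)\big)$ and $\big(\ker(i)\to 1\big)$ are weakly equivalent. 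Everything after that is the standard multiplicative Leray--Serre machinery together with the identifications of Section~\ref{sec:cohomology}.
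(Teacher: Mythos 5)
Your argument is correct and follows essentially the same route as the paper: pass to realizations of nerves, obtain a (quasi)fibration with fiber the realization of $[\ker(i)\rto 1]$ over the realization of $[1\to H/i(G)]$, and take its multiplicative Leray--Serre spectral sequence, identifying the terms as in Section~\ref{sec:cohomology}. The only difference is the source of the topological fibration statement: the paper deduces the quasi-fibration from the main theorem of~\cite{And} (which is essentially your ``hands-on substitute'' via the levelwise structure of the simplicial map), rather than from the realization of $2$-group fibrations in~\cite{Hen}.
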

\begin{proof} It follows from the main theorem of~\cite{And} that the realization of the map  $[G\rto[i] H] \rto{}  [1\gto H/i(G)]$ is a quasi-fibration  with homotopy fiber being the (fat) realization of $[\ker(i)\rto 1]$ (since this fat realization is homotopic to the fat realization of $[G \rto[i] i(G)]$). In fact, one can show that this quasi-fibration is indeed a fibration.  
The spectral sequence~\eqref{eq:Leray}  is the Leray-Serre spectral sequence of this (quasi)fibration. 
\end{proof}

By the  same argument,
 it also  follows that there is a long exact sequence of homotopy groups 
\footnote{The sequence should not be confused with the long exact sequence of simplicial homotopy groups in~\cite[Section 3]{Hen}.}
{\small \eq \label{eq:homotopygroups}
\dots \pi_1([1\!\gto H/i(G)])\gto \pi_0([\ker(i)\! \rto \! 1])\gto \pi_0([G\!\rto[i]\! H]) \gto\pi_0([1\!\gto\! H/i(G)])\gto 0.
\eneq}

\begin{remark}
The algebra structures in Lemma~\ref{lm:Leray} are induced by the algebra structure on the  singular   or de Rham cohomology of the respective Lie $2$-groups. 
\end{remark}

\begin{remark}\label{rm:Leray}
A similar proof implies that if $[G_2\rto[i_2] H_2] \gto{} [G_1\rto[i_1] H_1]$ is a fibration of $2$-groups with fiber $F$, then there is a Leray spectral sequence $$L_2^{p,q}=H^p([G_1 \rto[i_1] H_1], \mathcal{H}^q(F)) \Longrightarrow  H^{p+q}([G_2\rto[i_2] H_2]).$$
\end{remark}

\begin{remark}
In the special case of discrete $2$-groups, the Leray-Serre spectral sequence~\eqref{eq:Leray} has been studied in~\cite{DaPi}. In this rather different context, the higher differentials in the spectral sequence are related to the $k$-invariant of the crossed module.  
\end{remark}

\smallskip

We now give a more explicit description of the complex $\OtGH[\com]$ in degree $\leq 4$ which will be needed  in Sections~\ref{sec:abelian} and~\ref{sec:GAUTG}. 
Until the end of this Section, we denote by $\gm$   the $2$-group $G\rtimes H\toto H\toto\{*\}$ associated to the crossed module $G\rto[i] H$.
 One has   $\Ng[0]=*$ and $\Ng[1]=H$. Since there is only one object in the underlying category, all 1-arrows can be composed. Thus, a triangle as in equation~\eqref{eq:N3} is given by a 2-arrow $\alpha \in G\rtimes H$ and  a $1$-arrow $f_0$. Hence, $\Ng[2] \cong (G\times H)\times H$. 
With this choice, for $(g,h,f)\in \Ng[2]$, the corresponding 2-arrow $\alpha$ and 1-arrows $f_0,f_1,f_2$ in equation~\eqref{eq:N3} are respectively given by 
\eq
\label{eq:N32group} \alpha= (g,h),\qquad   f_0=f, \qquad  f_1=h\cdot i(g), \qquad f_2=h\cdot f^{-1}.
\eneq
The three face maps $d_i:\Ng[2]\gto \Ng[1]$ ($i=0,1,2$)  are given by $d_i(g,h,f)= f_i$, $i=0,1,2$ (see equation~\eqref{eq:N32group}). 
 \begin{remark}Of course the choice of $f_0$ is a convention, we could have equivalently chosen to work with $f_2$.  
\end{remark}
$\Ng[3]$ is the space of commutative tetrahedron labelled by objects and arrows of $\Gamma$
\eq \label{eq:tetrahedron}&&
{\xymatrix@C=3pt@R=12pt@M=6pt{ &&& A_3   &&& \\
                               &&&&  && \\
                            &&& A_1 \ar[uu]^{f_{02}} \ar@{=>}[ul]^{\alpha_2}
                                \ar@{=>}[d]^{\alpha_3} \ar[drrr]^{f_{03}} &     \ar@{=>}[ul]_{\alpha_0}  &&   \\
                        A_0 \ar[rrrrrr]_{f_{13}} \ar[urrr]^(0.65){f_{23}}   \ar[rrruuu]^{f_{12}}
       && \ar@{:>}[ul]_<{\alpha_{1}} &&&& A_2 \ar[llluuu]_{f_{01}}   }}
\eneq
The commutativity means
that  one has $(\alpha_3 * f_{01})\star \alpha_1 = (f_{23}*\alpha_0)\star \alpha_2$, where $\star$ is the vertical multiplication of 2-arrows and $*$ is the horizontal multiplication. Since there is only one object, such a tetrahedron is given by $\alpha_0,f_{01},\alpha_2$ and $\alpha_3$ satisfying $s(\alpha_3)=s(\alpha_2).(t(\alpha_0))^{-1}.s(\alpha_0).f_{01}^{-1}$. Thus $\Ng[3]\cong G^{3}\times H^3$. The face maps $d_i$ ($i=0...4$)  are given by the restrictions to the triangle which doesn't contain $A_i$ as a vertex. Thus,  given $(g_0,g_2,g_3,h_0,f_{01},h_2)\in G^3\times H^3$, one has 
\eq  d_0(g_0,g_2,g_3,h_0,f_{01},h_2) &=& (g_0,h_0,f_{01})\label{eq:d0}\\
   d_1(g_0,g_2,g_3,h_0,f_{01},h_2)&=& ((g_3^{-1})^{f_{01}}g_0^{h_2i(g_0^{-1})h_0^{-1}}g_2,h_2,f_{01})\label{eq:d1}\\
 d_2(g_0,g_2,g_3,h_0,f_{01},h_2)&=& (g_2,h_2,h_0.i(g_0))\label{eq:d2} \\ \label{eq:d3}
d_3(g_0,g_2,g_3,h_0,f_{01},h_2)&=& (g_3, h_2.i(g_0^{-1})f_{01}^{-1},h_0.f_{01}^{-1}).
\eneq
\begin{remark}
 The choice of indices in $(g_0,g_2,g_3,h_0,f_{01},h_2)$ is reminiscent of the tetrahedron~\eqref{eq:tetrahedron}. That is the two arrow $\alpha_0 =(g_0,h_0)\in G\times H$, the $1$-arrow from $A_2$ to $A_3$ is $f_{01}$ and so on \dots For instance, the $2$-arrow $\alpha_1=(g_1,h_1)\in G\times H$ is given by Equation~\eqref{eq:d1}, i.e, $g_1= (g_3^{-1})^{f_{01}}g_0^{h_2i(g_0^{-1})h_0^{-1}}g_2$ and $h_1=h_2$. 
\end{remark}

Applying the differential form functor, we get
\anum
\item $\OtGH[0]=\Og[0](*)\cong \R$,
\item $\OtGH[1] = \Og[0](H)$,
\item $\OtGH[2]=\Og[1](H)\oplus \Og[0](G\times H\times H)$. The differentials from $\OtGH[1]$ to $\OtGH[2]$ are given by $\ddR:  \Og[0](H)\gto \Og[1](H)\subset \OtGH[2]$ and $\dpa =d_0^*-d_1^*+d_2^*:\Og[0](H)\gto \Og[0](G\times H\times H)\subset \OtGH[2]$. 
\item $\OtGH[3]=\Og[2](H) \oplus \Og[1](G\times H\times H) \oplus \Og[0](G^3\times H^3)$. The differential are similar to the previous ones.
\item $\OtGH[4]=\Og[3](H) \oplus \Og[2](G\times H\times H) \oplus \Og[1](G^3\times H^3)\oplus \Og[0](\Ng[4])$. 
\enum
\begin{remark} \label{rm:Nerve}
For $p\geq 4$, an element in $\Ng[p]$ is a commutative $p$-simplex labelled by arrows of $\gm$ whose faces of dimension $2$ are elements of $\Ng[2]$ with compatible edges. 
Denoting $A_0,\dots, A_p$ the vertices of the $p$-simplex,  the commutativity implies that it is enough to know all the $2$-faces containing $A_0$. Reasoning as for $\Ng[3]$, it  follows that $\Ng[p]\cong G^{\frac{p(p-1)}{2}}\times H^p$. Details are left to the reader.  
\end{remark}

Let $\g$ be the Lie algebra of $G$.    There is an obvious map $ (\g^*)^{\g}\hookrightarrow \Og[1](G)$ which sends $\xi\in \g^*$ to its left invariant $1$-form $\xi^L$. By composition we have a map
\eq \label{eq:I}
  (\g^*)^{\g}\hookrightarrow \Og[1](G) \rto[p_1^*]\Og[1](G\times H\times H) \hookrightarrow \OtGH[3]
\eneq 
where $p_1:G\times H\times H\gto G$ is the projection.

 The action of $H$ on $G$ induces an action of $H$ on $\g$, and therefore  an action on $\g^*$. 
The map I above clearly restricts to $(\g^*)^{\g, H}$, the subspace of $\g^*$ consisting of elements  both $\g$ and $H$ invariant. Assigning the degree 3 to elements of $(\g^*)^{\g, H}$, {\it i.e.}, replacing  $(\g^*)^{\g, H}$ by  $(\g^*)^{\g, H}[3]$,  we have the following 

\begin{proposition}\label{pro:I}
The map $I:  \big((\g^*)^{\g,H}[3],0\big)\gto \big(\OtGH[\com], \ddR +\dpa\big)$ is a map of cochain complexes, {\em i.e.}, $(\ddR+\dpa)(I)=0$.
\end{proposition}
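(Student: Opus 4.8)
The plan is to verify directly that $(\ddR + \dpa)(I(\xi)) = 0$ for each $\xi \in (\g^*)^{\g,H}$, using the explicit description of $\OtGH[\com]$ in degrees $\leq 4$ given above. Write $\omega = \xi^L$ for the left-invariant $1$-form on $G$ attached to $\xi$, so that $I(\xi) = p_1^*\omega \in \Og[1](G\times H\times H) \subset \OtGH[3]$, with zero components in $\Og[2](H)$ and $\Og[0](G^3\times H^3)$. The total differential of an element sitting in the single summand $\Og[1](G\times H\times H)$ of $\OtGH[3]$ lands in $\OtGH[4] = \Og[3](H) \oplus \Og[2](G\times H\times H) \oplus \Og[1](G^3\times H^3) \oplus \Og[0](\Ng[4])$, and only two components can be nonzero: $\ddR(p_1^*\omega) \in \Og[2](G\times H\times H)$, and $\dpa(p_1^*\omega) \in \Og[1](G^3\times H^3)$ (the $\Og[3](H)$ and $\Og[0](\Ng[4])$ components vanish automatically for degree reasons). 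So the statement reduces to two independent claims: (a) $\ddR(p_1^*\omega) = 0$, and (b) $\dpa(p_1^*\omega) = 0$.

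For (a): since $\omega = \xi^L$ with $\xi \in (\g^*)^{\g}$, the Maurer--Cartan / Chevalley--Eilenberg formula gives $\ddR \omega = -\tfrac12 (d_{\rm CE}\xi)^L$, where $d_{\rm CE}\xi \in \Lambda^2\g^*$ is the Chevalley--Eilenberg differential; the $\g$-invariance of $\xi$ is exactly the condition $d_{\rm CE}\xi = 0$ (for $\xi \in \g^*$, $d_{\rm CE}\xi(X,Y) = -\xi([X,Y])$, and $\g$-invariance means $\xi$ vanishes on $[\g,\g]$). Hence $\ddR\omega = 0$ on $G$, and therefore $\ddR(p_1^*\omega) = p_1^*(\ddR\omega) = 0$.

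For (b): here $\dpa = (-1)^1\sum_{i=0}^{3}(-1)^i d_i^*$ acting on $\Og[1](\Ng[2])$ with the face maps $d_i : \Ng[3] \to \Ng[2]$ obtained from equations~\eqref{eq:d0}--\eqref{eq:d3}. Since $\omega$ only depends on the $G$-coordinate and is $p_1^*$ of a form on $G$, I need the $G$-components of $d_0,\dots,d_3$ as maps $G^3\times H^3 \to G$: from \eqref{eq:d0}--\eqref{eq:d3} these are $g_0$, $(g_3^{-1})^{f_{01}} g_0^{h_2 i(g_0^{-1}) h_0^{-1}} g_2$, $g_2$, $g_3$ respectively. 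Thus $\dpa(p_1^*\omega) = -\big(\omega_{g_0} - \omega_{(g_3^{-1})^{f_{01}} g_0^{h} g_2} + \omega_{g_2} - \omega_{g_3}\big)$ where $h = h_2 i(g_0^{-1})h_0^{-1}$ and $\omega_{\bullet}$ denotes the pullback of $\omega$ along the indicated $G$-valued function. Now I use two properties of $\omega = \xi^L$: left-invariance gives, for the product of two $G$-valued maps $u,v$, the identity $(uv)^*\omega = u^*\omega + \mathrm{Ad}^*$-type correction, but more usefully, since $\xi$ is \emph{also} $\g$-invariant, $\xi^L$ is bi-invariant, so $(uv)^*\xi^L = u^*\xi^L + v^*\xi^L$ for any maps $u,v$ into $G$; and $H$-invariance of $\xi$ gives $(u^{h})^*\xi^L = u^*\xi^L$ for the pointwise $H$-action, while $(u^{-1})^*\xi^L = -u^*\xi^L$. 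Applying these to the middle term: $\omega_{(g_3^{-1})^{f_{01}} g_0^{h} g_2} = -\omega_{g_3} + \omega_{g_0} + \omega_{g_2}$ (using $H$-invariance to drop the $f_{01}$- and $h$-conjugations and bi-invariance to split the product). Substituting back, the four terms cancel in pairs and $\dpa(p_1^*\omega) = 0$.

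Combining (a) and (b), $(\ddR+\dpa)(I(\xi)) = 0$, and linearity in $\xi$ finishes the proof. The only real subtlety—the step I expect to need the most care—is (b): one must correctly extract the $G$-coordinates of the face maps from \eqref{eq:d1}, keep track of the signs in $\dpa$, and invoke precisely the right combination of bi-invariance (from $\g$-invariance) and $H$-invariance of $\xi$ to collapse the conjugated product in the $d_1$ term; everything else is formal.
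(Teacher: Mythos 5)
Your proposal is correct and follows essentially the same route as the paper: reduce to showing $\ddR(\xi^L)=0$ (from $\g$-invariance) and $\dpa(p_1^*\xi^L)=0$, the latter via the primitivity identity $m^*\xi^L=p_1^*\xi^L+p_2^*\xi^L$ (your ``bi-invariance'' step) together with the $H$-invariance identity $\tilde m^*\xi^L=\tilde p^*\xi^L$, and then the alternating sum over the face maps \eqref{eq:d0}--\eqref{eq:d3}. The only difference is that you carry out explicitly the cancellation that the paper dismisses as ``a simple computation,'' and your extraction of the $G$-components and the resulting pairwise cancellation are correct.
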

\begin{proof}
We have $(\g^*)^{\g}\cong \zg^*$, where $\zg$ is the center of the Lie algebra $\g$. 
Since the de Rham differential vanishes on $(\g^*)^{\g}$, it remains to prove  that $\dpa \circ I=0$.
 For any $\xi \in (\g^*)^\g$ and left invariant vector fields $\overleftarrow{X}$, $\overleftarrow{Y}$,
  \eqn m^*(\xi^L)(\overleftarrow{X}_g,\overleftarrow{Y}_h)=\xi^L(m_*(\overleftarrow{X}_g,\overleftarrow{Y}_h))&=&\xi^L(\overleftarrow{X}_{gh},\overleftarrow{{\rm Ad}_g Y}_{gh})\\
&=& \xi(X) +\xi(Y)\\
&=&\big(p_1^*(\xi^L)+p_2^*(\xi^L)\big)(\overleftarrow{X}_g,\overleftarrow{Y}_h),\eneqn where $m,p_1,p_2: G\times G\gto G$ are respectively the product map and the two projections. 
If, moreover, $\xi\in (\g^*)^{\g,H}$, then  $\tilde{m}^*=\tilde{p_2}^*$, where $\tilde{m}, \tilde{p_2}:G\times H\gto G$ are respectively the action map and the projection. 
Since $I(\xi) \in \Omega^1(G)\subset \Omega^1(G\times H\times H)\subset \OtGH[3]$, the result follows from a simple computation using formulas~\eqref{eq:d0}-\eqref{eq:d3}. 
\end{proof}

\smallskip

By Proposition~\ref{pro:I}, the images of the map $I:(\g^*)^{\g,H}[3]\gto \OtGH[3]$ are automatically  cocycles. Recall that $S^{}\big((\g^*)^{\g,H}[3]\big)$ is the free graded commutative algebra on the vector space $(\g^*)^{\g,H}[3]$ which is concentrated in degree 3. 
Thus $S^{}\big((\g^*)^{\g,H}[3]\big)$ is
indeed  an exterior algebra.  
By the universal property of   free graded commutative algebras, we obtain : 
\begin{corollary}\label{cor:I}
The map $I: (\g^*)^{\g, H}[3]\gto H^{3}([G\rto[i] H])$ extends uniquely to a morphism of graded commutative algebras  
$$I:  S^{}\big((\g^*)^{\g,H}[3]\big)^\com \gto H^{\com}([G\rto[i] H]) .$$  In fact, the class $I(\xi_1\cdot \dots \cdot \xi_r)$, $\xi_1,\dots, \xi_r\in (g^*)^\g$, is represented by the cocycle $I(\xi_1)\cup\dots \cup I(\xi_r)\in \Omega^{3r}([G\rto[i] H])$. 
\end{corollary}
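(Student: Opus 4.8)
The plan is to deduce Corollary~\ref{cor:I} from Proposition~\ref{pro:I} together with two standard facts: the multiplicativity of the cup product on $\Otg[\com]$ recalled earlier in Section~\ref{sec:cohomology}, and the universal property of the free graded commutative algebra. First I would record the degree bookkeeping: an element $\xi\in(\g^*)^{\g,H}$ sits in degree $3$ after the shift $[3]$, and $I(\xi)\in\Og[1](G)\subset\OtGH[3]$ indeed has total degree $3$; since $3$ is odd, $I(\xi)\cup I(\xi)$ is forced to be zero in cohomology (the cup product makes $H^\com$ graded commutative), so there is no obstruction to extending $I$ past linear terms. By Proposition~\ref{pro:I} each $I(\xi)$ is a cocycle, hence defines a class $[I(\xi)]\in H^3([G\rto[i] H])$, and this gives a linear map $(\g^*)^{\g,H}[3]\to H^\com([G\rto[i] H])$ of degree $0$.

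Next, since $S^{}\big((\g^*)^{\g,H}[3]\big)$ is the free graded commutative $\R$-algebra on the graded vector space $(\g^*)^{\g,H}[3]$, any linear map from that space into a graded commutative $\R$-algebra extends uniquely to a morphism of graded commutative algebras. Applying this with the target $\big(H^\com([G\rto[i] H]),\cup\big)$ — which is graded commutative by the discussion following the definition of the cup product — yields the desired algebra map $I\colon S^{}\big((\g^*)^{\g,H}[3]\big)^\com\to H^\com([G\rto[i] H])$, and uniqueness is part of the universal property. It remains to identify the value on a monomial: by construction of the extension, $I(\xi_1\cdots\xi_r)=[I(\xi_1)]\cup\cdots\cup[I(\xi_r)]$ in cohomology, and since each $I(\xi_j)$ is a genuine cocycle in $\OtGH[3]$ and the cup product of cocycles is a cocycle representing the cup product of classes, the element $I(\xi_1)\cup\cdots\cup I(\xi_r)\in\Omega^{3r}([G\rto[i] H])$ is a cocycle representing $I(\xi_1\cdots\xi_r)$. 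One should also check this assignment is well defined on $S^{}$, i.e. respects the symmetric-algebra relations: commuting two odd-degree generators introduces a sign $-1$, matched on the cohomology side by graded commutativity of $\cup$, and $\xi\cdot\xi=0$ in $S^{}$ is matched by $[I(\xi)]\cup[I(\xi)]=0$; both are automatic.

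I do not expect a genuine obstacle here — the statement is essentially a formal consequence of Proposition~\ref{pro:I}. The one point that deserves a sentence of care is the last one: a priori the universal property only produces an algebra map to $H^\com$, defined cohomologically, whereas the corollary asserts the stronger chain-level statement that $I(\xi_1)\cup\cdots\cup I(\xi_r)$ is itself a cocycle (not merely cohomologous to one) representing the class. This follows because the $I(\xi_j)$ are cocycles on the nose and $\cup$ is a chain map $\Otg[\com]\otimes\Otg[\com]\to\Otg[\com]$, so cup products of cocycles are cocycles; thus no further argument (such as choosing a different representative) is needed. In summary: cocycle property from Proposition~\ref{pro:I}; odd degree kills squares so the extension exists; freeness of $S^{}$ gives existence and uniqueness of the algebra morphism; compatibility of $\cup$ with cocycles identifies the representative.
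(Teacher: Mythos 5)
Your proposal is correct and follows essentially the same route as the paper: Proposition~\ref{pro:I} supplies the cocycle property, the universal property of the free graded commutative algebra $S^{}\big((\g^*)^{\g,H}[3]\big)$ gives the unique algebra extension into $\big(H^\com([G\rto[i] H]),\cup\big)$, and chain-level multiplicativity of $\cup$ identifies $I(\xi_1)\cup\dots\cup I(\xi_r)$ as a representing cocycle. Your extra remarks (odd degree forcing squares to vanish, well-definedness on the symmetric-algebra relations) are just the details the paper leaves implicit.
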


\section{Cohomology of $[A\rto 1 ]$}\label{sec:abelian}
The following lemma is  well-known. 
\begin{lemma}\label{lm:KZ3}
The nerve $N_\com([S^1\rto 1])$ is a $K(\Z,3)$-space. 
\end{lemma}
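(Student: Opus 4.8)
The plan is to identify the homotopy type of the realization of the nerve $N_\com([S^1\to 1])$ by computing its homotopy groups. Recall from Section~\ref{sec:cohomology} that the homotopy groups of the Lie $2$-group $[S^1\to 1]$ are by definition those of the fat realization $|N_\com([S^1\to 1])|$. First I would use the fibration of Lemma~\ref{lm:fibration}: since the crossed module $[S^1\to 1]$ fits, via the projection to the trivial crossed module $[1\to 1]$, into a situation where the relevant homotopy fiber can be read off from~\eqref{eq:homotopygroups}. More directly, I would use that $[S^1\to 1]$ is the \emph{loop object shifted}: the crossed module $A\to 1$ has kernel $A$ and cokernel $1$, so from the long exact sequence~\eqref{eq:homotopygroups} applied to $[A\to 1]$ one gets $\pi_0([A\to 1])=0$ and, since $[1\to 1]$ is contractible, the fiber $[\ker(i)\to 1]=[A\to 1]$ itself carries all the homotopy; the point is rather to show directly that $\pi_n(|N_\com([A\to 1])|)$ is $A$ for $n$ equal to the appropriate degree and $0$ otherwise.

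The cleanest route is the following. For an abelian Lie group $A$, the crossed module $[A\to 1]$ corresponds to the $2$-group with objects $\{*\}$ and $2$-arrows $A$ (composing by the group law), i.e.\ the one-object one-$1$-arrow $2$-group $BA$ viewed as a $2$-group. Its nerve, by the description recalled before Remark~\ref{rm:Nerve} (with $G=A$, $H=1$, so $N_p\cong A^{p(p-1)/2}$), is precisely the simplicial set/manifold obtained by applying the nerve construction twice to $A$: it is $N_\com(N_\com(A))$ up to the standard diagonal, equivalently the bar construction $B(BA)=B^2 A$. Hence $|N_\com([A\to 1])|\simeq B^2A$. For $A=S^1$ this gives $B^2 S^1 = B(BS^1)=B(\mathbb{CP}^\infty)=B(K(\Z,2))=K(\Z,3)$, using $BS^1\simeq \mathbb{CP}^\infty = K(\Z,2)$ and the general fact $BK(\Z,n)\simeq K(\Z,n+1)$ for the topological abelian group $K(\Z,n)$. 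Since $N_\com([S^1\to 1])$ is then a simplicial manifold whose realization is a $K(\Z,3)$, the lemma follows.

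Concretely, I would carry out the steps in this order: (i) observe that for abelian $A$ the $2$-group $[A\to 1]$ is abelian (a group object in $2$-groups), so its nerve is a simplicial abelian group-like object and its realization is an infinite loop space / Eilenberg--MacLane-type space; (ii) compute the homotopy groups of $|N_\com([A\to 1])|$ either by the double-bar identification $|N_\com([A\to 1])|\simeq B(BA)$ or by the Leray--Serre spectral sequence of Lemma~\ref{lm:Leray}/Remark~\ref{rm:Leray} applied to the fibration $[A\to 1]\to[1\to 1]$ with fiber $[A\to 1]$ — but the bar identification is more efficient; (iii) specialize $A=S^1$, using $BS^1\simeq K(\Z,2)$ and $B(K(\Z,2))\simeq K(\Z,3)$, to conclude $\pi_3=\Z$ and $\pi_n=0$ for $n\neq 3$, hence $|N_\com([S^1\to 1])|\simeq K(\Z,3)$.

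The main obstacle is purely bookkeeping: making precise the identification $|N_\com([A\to 1])|\simeq B^2A$, i.e.\ checking that the geometric nerve of the one-object $2$-group attached to $[A\to 1]$ (the $N_p\cong A^{p(p-1)/2}$ description, whose face maps specialize~\eqref{eq:d0}--\eqref{eq:d3} with $H=1$, $i=0$) agrees, after fat realization, with the classifying space of the topological group $BA$. This is a standard fact about geometric nerves of (strict) $2$-groups and the commutativity of realization with the $W$-bar/double-nerve construction, so I would cite it (e.g.\ \cite{Str, Seg, BSS}) rather than reprove it; once granted, the rest is immediate. Alternatively one sidesteps this by a direct homotopy-group computation via the fibration sequence, at the cost of a slightly longer argument with the spectral sequence.
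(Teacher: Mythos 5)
Your argument is correct, but it is not the route the paper takes. The paper's proof is a three-line fibration argument internal to its own machinery: the exponential sequence $1\to\Z\to\R\to S^1\to 1$ gives (via Lemma~\ref{lm:fibration}, since $\R\to S^1$ is a surjective submersion) a fibration of $2$-groups $[\Z\rto 1]\to[\R\rto 1]\to[S^1\rto 1]$; the fiber is a $K(\Z,2)$ because $\Z$ is discrete (a cited fact from the discrete-$2$-group literature), the total space $[\R\rto 1]$ is contractible, and the long exact homotopy sequence then forces $N_\com([S^1\rto 1])$ to be a $K(\Z,3)$. You instead identify the realization of the geometric nerve of $[A\to 1]$, for $A$ an abelian Lie group, with the double classifying space $B^2A$ (the $\overline{W}$/bar delooping of the simplicial group $N_\com A$), and then specialize: $B^2S^1\simeq B(K(\Z,2))\simeq K(\Z,3)$. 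Both proofs are sound. What your route buys is a stronger statement proved uniformly: $|N_\com([A\to 1])|\simeq B^2A$ for every abelian Lie group $A$, which in particular recovers the discrete case $[\Z\rto 1]\simeq K(\Z,2)$ that the paper only cites. What it costs is precisely the "bookkeeping" you flag: the Street/Duskin nerve of the $2$-group attached to $A\to 1$ is the Artin--Mazur codiagonal of the double nerve rather than its diagonal, and one must invoke (or prove) that these agree up to weak equivalence after (fat) realization, plus the delooping fact $BK(\Z,n)\simeq K(\Z,n+1)$ for a topological abelian group model of $K(\Z,n)$; these are standard and citable (e.g.\ \cite{Str, Seg, BSS}), but they are heavier external inputs than the paper's, which needs only its own fibration lemma and the discrete case. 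Your side remark about applying the sequence~\eqref{eq:homotopygroups} to $[A\to 1]$ itself is indeed uninformative (the cokernel is trivial, so the fibration over $[1\to 1]$ says nothing), but you correctly discard it, so it does not affect the argument.
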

\begin{proof}
Since $\Z$ is discrete, $ [\Z \rto 1]$ is a $K(\Z,2)$-space (see for instance~\cite{Lod, MoSv, Noo}). Furthermore, $[\R\rto 1]$ is homotopy equivalent to $[1\rto 1]$. Thus
the result follows from the fibration of 2-groups $
 [\Z \rto 1] \longrightarrow [\R\rto 1] \longrightarrow [S^1\gto 1]$.
\end{proof}
Let $A$ be an abelian compact Lie group with Lie algebra $\a$. 
Then $[A\gto 1]$ is a crossed module. 
By Corollary~\ref{cor:I}, we have a map  $I:S^{}(\a^*[3])\gto H^{\com}([A\gto 1])$.
\begin{proposition}\label{lm:A} Let $A$ be an abelian compact Lie group with Lie algebra $\a$.
The map $I:S^{}(\a^*[3])^\com \gto H^{\com}([A\gto 1])$ is an isomorphism of graded algebras.
\end{proposition}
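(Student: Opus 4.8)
The plan is to reduce the general compact abelian case to the torus case, and the torus case to the circle case treated in Lemma~\ref{lm:KZ3}, using the multiplicativity of everything in sight.

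\medskip

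\noindent\textbf{Step 1: reduce to a torus.} Write $A\cong T^n\times F$ with $T^n$ a torus and $F$ a finite abelian group. Since $[A\to 1]$ is a product of $2$-groups $[T^n\to 1]\times [F\to 1]$ and the nerve functor takes products to products, there is a K\"unneth decomposition $H^\com([A\to 1])\cong H^\com([T^n\to 1])\otimes H^\com([F\to 1])$. As $F$ is finite and discrete, $[F\to 1]$ is a $K(F,2)$-space, so with real coefficients $H^\com([F\to 1])\cong\R$ concentrated in degree $0$; hence the projection $A\to T^n$ induces an isomorphism $H^\com([T^n\to 1])\isoto[] H^\com([A\to 1])$. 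On the other hand $\a^*=(\mathrm{Lie}\,T^n)^*$, so $I$ for $A$ factors through $I$ for $T^n$ via this same iso; it therefore suffices to treat $A=T^n$.

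\medskip

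\noindent\textbf{Step 2: the circle.} For $A=S^1$, by Lemma~\ref{lm:KZ3} the nerve $N_\com([S^1\to 1])$ is a $K(\Z,3)$, whose real cohomology is the free graded-commutative algebra on one generator in degree $3$, i.e.\ $S(\R[3])$ (an exterior algebra on a degree-$3$ class, which is exactly $S$ of a one-dimensional space placed in odd degree). Since $\a^*\cong\R$ here, the domain $S(\a^*[3])$ is abstractly this same algebra; I must check $I$ hits a \emph{generator}. This is where Corollary~\ref{cor:I} does the real work: $I(\xi)\in\Omega^3([S^1\to 1])$ is an explicit cocycle built from the left-invariant $1$-form $\xi^L$ on $S^1$, and I need to verify it is not exact, equivalently that it pairs nontrivially with the generator of $H_3$. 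The cleanest way is to use the fibration $[\Z\to 1]\to[\R\to 1]\to[S^1\to 1]$ from the proof of Lemma~\ref{lm:KZ3}: in the associated spectral sequence (or directly via transgression) the generator of $H^2([\Z\to 1])\cong\Hom(\Z,\R)$ transgresses to $H^3([S^1\to 1])$, and one matches this transgressed class with $I(\xi)$ by tracing $\xi$ through the two-step delooping — $\xi\in\a^*$ corresponds to the homomorphism $\Z\to\R$ obtained by restricting $\xi^L$. Granting this, $I$ is an isomorphism of graded algebras for $S^1$ by a degree count, both sides being free on one degree-$3$ generator.

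\medskip

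\noindent\textbf{Step 3: from $S^1$ to $T^n$.} Write $T^n=(S^1)^n$; again the nerve functor is monoidal, so $H^\com([T^n\to 1])\cong H^\com([S^1\to 1])^{\otimes n}\cong S(\R[3])^{\otimes n}\cong S(\R^n[3])$, and under these identifications the map $I$ for $T^n$ is the $n$-fold external product of the maps $I$ for $S^1$ (this uses that $I$ is defined via the multiplicative structure, i.e.\ Corollary~\ref{cor:I}, and is natural for the projections $T^n\to S^1$). Since $S$ and $\otimes$ are compatible — $S(V\oplus W)\cong S(V)\otimes S(W)$ for graded vector spaces — and $\a^*\cong\R^n$, the map $I\colon S(\a^*[3])\to H^\com([T^n\to 1])$ is the tensor product of $n$ isomorphisms, hence an isomorphism. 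Combined with Step~1 this proves the proposition.

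\medskip

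\noindent\textbf{Main obstacle.} The genuinely nontrivial point is Step~2: showing the explicit cocycle $I(\xi)$ of Corollary~\ref{cor:I} represents a \emph{generator} of $H^3([S^1\to 1];\R)\cong\R$, not merely some class. Everything else is formal (K\"unneth, monoidality of the nerve, freeness of symmetric algebras, a dimension count). I expect to settle it either by the transgression argument sketched above, or by exhibiting an explicit $3$-cycle in $N_\com([S^1\to 1])$ (coming from the fundamental class of a lens-space-type model, or from the bar construction on $[\Z\to 1]$) against which $\int I(\xi)$ evaluates to a nonzero multiple of $\int_{S^1}\xi^L$; either way the computation is the crux and should be carried out carefully using formulas~\eqref{eq:d0}--\eqref{eq:d3}.
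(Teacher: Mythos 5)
Your overall strategy (reduce to a torus, then to $S^1$, then show $I$ hits a generator of $H^3([S^1\to 1])\cong\R$) is the same as the paper's, and Steps 1 and 3 are fine — the paper performs the reduction via the fibration $[A_0\to 1]\to[A\to 1]\to[A/A_0\to 1]$ and the Leray spectral sequence rather than via the splitting $A\cong T^n\times F$ and K\"unneth, but both routes work and cost about the same. The problem is Step 2: the one point you yourself flag as ``the genuinely nontrivial point'' — that the cocycle $I(\xi)$ is not a coboundary in $\Omega^3([S^1\to 1])$ — is never actually proved. You sketch two possible strategies (a transgression argument through $[\Z\to 1]\to[\R\to 1]\to[S^1\to 1]$, or pairing against an explicit $3$-cycle) and write ``Granting this\dots'' and ``I expect to settle it\dots''. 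As it stands this is a conditional argument with the crux deferred, so the proposal has a genuine gap precisely where the content lies; moreover the transgression route would require identifying the local-coefficient/multiplicative structure of that spectral sequence and matching the transgressed class with the explicit de Rham representative, which is more delicate than you suggest.

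The gap closes with a much more elementary observation, which is how the paper argues. For the crossed module $S^1\to 1$ one has $N_0=N_1=\{*\}$ and $N_2\cong S^1$, so the total complex in degree $2$ is simply $\Omega^2_{\rm tot}([S^1\to 1])\cong \Omega^0(S^1)$, and the only component of the total differential $\ddR+\dpa$ landing in the summand $\Omega^1(N_2)=\Omega^1(S^1)\subset\Omega^3_{\rm tot}([S^1\to 1])$ is the de Rham differential $\ddR\colon\Omega^0(S^1)\to\Omega^1(S^1)$. Since $I(1)$ is exactly the fundamental $1$-form of $S^1$ sitting in that summand, exactness of $I(1)$ in the total complex would force the fundamental $1$-form to be $\ddR$-exact, which it is not. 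No transgression, explicit $3$-cycle, or use of formulas~\eqref{eq:d0}--\eqref{eq:d3} is needed; once this is inserted, the rest of your argument goes through.
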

\begin{proof}
Since our cohomology groups have real coefficients,  it is sufficient
 to consider the case where $A$ is a torus $T^{k}$. 
Indeed, writing $A_0\cong T^k$ for the connected component of the identity in $A$, we have a fibration: $$[A_0\rto 1] \gto[] [A\rto 1] \gto[] [A/A_0 \rto 1].$$ Since $A$ is compact, $A/A_0$ is a finite group.  Thus $N_\com[A/A_0\rto 1]$ is a $K(A/A_0,2)$-space and in particular is simply connected. 
Then the Leray spectral sequence (Lemma~\ref{lm:Leray} and Remark~\ref{rm:Leray}) simplifies as $$L_2^{\com, \com}= H^\com(K(A/A_0,2))\otimes H^\com([A_0\gto 1])\Longrightarrow H^\com([A\gto 1]).$$ Since  $A/A_0$ is finite, $H^{i>0}(K(A/A_0,2))\cong 0$. Hence $H^\com([A\gto 1])\stackrel{\sim}\gto H^\com([A_0\gto 1])$. 

\smallskip

Now, assume $A=T^k$. The K\"unneth formula implies that $H^{\com}([A\gto 1])\cong \big(H^\com([S^1\rto 1])\big)^{\otimes k}$ as an algebra. Since $I$ is a morphism of algebras,  it is  sufficient  to consider the case  $k=1$, {i.e.}, $A=S^1$. 

\smallskip

Lemma~\ref{lm:KZ3} implies that $H^{\com}([S^1\gto 1])\cong S(x)$, where $x$ is of degree 3.  
It remains to prove  that the map~\eqref{eq:I} $$I:\R \gto \Omega^{1}(S^1)\gto \Omega^3([S^1\gto 1])$$ generates the degree 3 cohomology of $[S^1\gto 1]$, {\it i.e.}, that $I(1)$ is not a coboundary in $\Omega^3([S^1\gto 1])$. 
Clearly $I(1)$ is the image of the fundamental $1$-form on $S^1$ by the inclusion $\Omega^1(S^1) \hookrightarrow \Omega^3([S^1\gto 1])$.
 By Section~\ref{sec:cohomology}, it is obvious that $\Omega^2([S^1\gto 1])\cong \Omega^0(S^1)$ and that the only component of the coboundary operator $\delta: \Omega^2([S^1\gto 1]) \gto \Omega^3([S^1\gto 1])$ lying in $\Omega^1(S^1)\hookrightarrow \Omega^3([S^1\gto 1])$ is the de Rham differential $\ddR: \Omega^0(S^1)\gto \Omega^1(S^1)$. Since the fundamental $1$-form is not exact, the result follows.
\end{proof}

\section{The case of a finite cokernel}\label{sec:finite}
In this section, we consider the particular case of a Lie $2$-group $[G\gto[i] H ]$ with finite cokernel.
\begin{theorem}\label{th:finitecokernel}
Let $[G\gto[i] H]$ be a Lie 2-group with finite cokernel $C:=H/i(G)$ and compact kernel $\ker(i)$. Let $\k$ be the Lie algebra of $\ker(i)$. There is an  isomorphism of graded algebras
\eqn
H^\com([G\gto[i] H]) \cong \left(S^{}\big(\k^*[3]\big)^{\com}\right)^C.
\eneqn
In particular, the cohomology is concentrated in degree $3q$, $q\geq 0$.
\end{theorem}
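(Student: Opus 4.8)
The plan is to combine the fibration \eqref{eq:coker}, the Leray--Serre spectral sequence of Lemma~\ref{lm:Leray}, and the computation of $H^\com([A\to 1])$ from Proposition~\ref{lm:A}, exactly as in the proof of Proposition~\ref{lm:A} but one level up. First I would note that the fibration $[G\to[i] H]\to [1\to C]$ has fiber $[\ker(i)\to 1]$, where $\ker(i)$ is a compact Lie group. Write $K_0$ for the connected component of the identity in $\ker(i)$; since $\ker(i)$ is compact, $K_0$ is a torus $T^k$ with $\mathrm{Lie}(K_0)=\k$, and $\ker(i)/K_0$ is finite. Repeating the fibration argument of Proposition~\ref{lm:A} (the fibration $[K_0\to 1]\to[\ker(i)\to 1]\to[\ker(i)/K_0\to 1]$ together with $H^{i>0}(K(\ker(i)/K_0,2))\cong 0$ with real coefficients) gives $H^\com([\ker(i)\to 1])\cong H^\com([K_0\to 1])\cong S(\k^*[3])^\com$, concentrated in degrees $3q$. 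Moreover this isomorphism is realized by the map $I$ of Corollary~\ref{cor:I} applied to $[\ker(i)\to 1]$, which is functorial in the crossed module.

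Next I would set up the Leray--Serre spectral sequence $L_2^{p,q}=H^p([1\to C],\mathcal H^q([\ker(i)\to 1]))\Rightarrow H^{p+q}([G\to[i] H])$. Since $C$ is finite, $[1\to C]$ has nerve $K(C,2)$, which is simply connected, so the local system is trivial: the action of $\pi_1$ is trivial and $L_2^{p,q}=H^p(K(C,2))\otimes H^q([\ker(i)\to 1])$ — except that I must be careful, because the $C$-action on the cohomology of the fiber is \emph{not} through $\pi_1(K(C,2))=0$ but rather is the honest action of $C$ on $\ker(i)$ by conjugation/automorphisms (recall $H$ acts on $\ker(i)$, and this descends since $i(G)$ acts trivially on $\ker i$ up to inner automorphisms — one checks $\ker i$ is central-ish, or more precisely the action of $i(G)$ on $\k$ is trivial because condition (2) of a crossed module forces conjugation by $i(y)$ on $G$, hence on $\k$, to be... ). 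Actually the cleanest route: the relevant coefficient system on $[1\to C]$ assigns to the fiber its cohomology $S(\k^*[3])$ with the induced $C=H/i(G)$-action on $\k^*$; one must verify $i(G)$ acts trivially on $\k=\ker(\mathrm{Lie}\,i)$ so that the $H$-action factors through $C$. Then $L_2^{p,q}=H^p(C, S^q(\k^*[3]))$ — group cohomology of the finite group $C$ with coefficients in a real (hence divisible, torsion-free as a $\Q$-vector space) module. Since $C$ is finite and the coefficients form a $\Q$-vector space, $H^p(C,-)=0$ for $p>0$ and equals the invariants for $p=0$.

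Therefore the spectral sequence collapses at $E_2$ with only the column $p=0$ surviving, giving $H^n([G\to[i] H])\cong L_2^{0,n}=\big(S(\k^*[3])^n\big)^C$, and all terms vanish unless $n=3q$. To upgrade this to an isomorphism of algebras and to nail down that it is literally the map induced by $I$, I would invoke the multiplicative structure of the Leray--Serre spectral sequence (Lemma~\ref{lm:Leray} states it is a spectral sequence of algebras) together with naturality of $I$ (Corollary~\ref{cor:I}) under the crossed-module morphism $[\ker(i)\to 1]\to[G\to[i] H]$: the image of $I$ lands in $H$-invariant (hence $C$-invariant) classes and maps isomorphically onto the surviving edge $L_\infty^{0,\com}$. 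I expect the main obstacle to be the coefficient-system bookkeeping in the second paragraph: precisely identifying the $C$-module structure on $H^\com$ of the fiber and checking $i(G)$ acts trivially on $\k$, so that one is genuinely computing $H^\com(C,-)$ rather than cohomology of $[1\to C]$ with a more exotic local system; everything else (collapse because $|C|$ is invertible in $\R$, concentration in degrees $3q$, multiplicativity via $I$) is routine.
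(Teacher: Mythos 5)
Your proposal follows the paper's own argument almost verbatim: the fibration \eqref{eq:coker} over $[1\to C]$ with fiber $[\ker(i)\to 1]$, the Leray--Serre spectral sequence of Lemma~\ref{lm:Leray}, the identification $H^\com([\ker(i)\to 1])\cong S(\k^*[3])^\com$ via Proposition~\ref{lm:A}, collapse of the spectral sequence because $C$ is finite and the coefficients are $\R$-vector spaces, and multiplicativity to get an algebra isomorphism. Two points should be cleaned up. First, the nerve of $[1\to C]$ is $BC=K(C,1)$, not $K(C,2)$ (the latter is the nerve of $[C\to 1]$): the crossed module $1\to C$ is just the $1$-group $C$, so the base is \emph{not} simply connected, and the local coefficient system is precisely the monodromy action of $\pi_1(BC)=C$ on the cohomology of the fiber. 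This is exactly what makes the $E_2$-term the ordinary group cohomology $H^p\big(C,S(\k^*[3])^q\big)$ and is where the $C$-invariants come from; if the base really were the simply connected space $K(C,2)$, the local system would be trivial and the collapse would give $S(\k^*[3])^\com$ with no invariants, which is the wrong answer. You do end up writing the correct $E_2$-term, but the justification should go through $BC$ rather than an ad hoc correction. Second, the verifications you flag are immediate from the second crossed-module axiom: for $x\in\ker i$ and any $y\in G$ one has $y^{i(x)}=x^{-1}yx$, and since $i(x)=1$ this gives $x\in Z(G)$; hence $\ker i$ is abelian (so its identity component is indeed a torus and Proposition~\ref{lm:A} applies), and $x^{i(y)}=y^{-1}xy=x$ shows the $H$-action on $\ker i$, hence on $\k^*$, factors through $C=H/i(G)$, so the coefficient module is as you claim.
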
 
\begin{proof}
The $2$-group $[1\gto C]$ is naturally identified with the $1$-group $C$. 
Thus its nerve $N_\com [1\gto C]$ coincides with the classifying space $BC$ of $C$.
 Furthermore, since $C$ is finite (thus discrete),  the cohomology  (with local coefficients) $H^\com([1\gto C], \mathcal{H}^q([\ker(i) \rto 1]))$ is isomorphic to the  usual group cohomology $H^\com(C, H^q([\ker(i) \rto 1])$,  where the $C$-module structure on $H^q([\ker(i) \rto 1]))$ is induced by the $C$-action  on $\ker(i)$. 
Since $H^q([\ker(i) \rto 1])$ is an $\R$-module and $C$ is finite, the cohomology $H^\com(C, H^q([\ker(i) \rto 1]))$ is concentrated in degree zero so that the spectral sequence of Lemma~\ref{lm:Leray} collapses. Hence 
{\small \eqn
H^\com([G\gto[i] H]) \cong H^0(C, H^q([\ker(i) \rto 1]))\cong  H^q([\ker(i) \rto 1]))^C \cong S\left(^{}\big((\k^*)^{\k}[3]\big)^q\right)^C
\eneqn}
According to Proposition~\ref{lm:A}, they are also isomorphic as algebras due to  the multiplicativity of the spectral sequence and the freeness of $S^{}\big((\k^*)^{\k}[3]\big)$.
\end{proof}

\begin{remark}   
One can find explicit generators for the cohomology $H^\com([G\gto H])$ as follows. For all $y\in \ker(i)$, $x\in G$, $y^{-1}xy=x^{i(y)}=x$. 
Thus $K\subset Z(G)$ and $\zg$ splits as a direct sum $\zg\cong \k \oplus \mathfrak{n}$. We denote by $J$ the map $\g^\g\cong \zg\gto \k$.  
The composition of $J^*: \k^*\gto \g^*$ with the map~\eqref{eq:I} is the map  $$\tilde{I}:\k^* \gto[J^*] \left({\g^*}\right)^\g\gto \Omega^1(G\times H^2)\subset \Omega^3_{\rm tot}([G\gto H]).$$ If $x_1.\dots x_q\in S^{q\geq 2}\big(\k^*[3]\big)$, then $I(x_1)\cup \dots \cup I(x_q)$ lies in $\Omega^q(N_{2q}([G\gto H]))\subset \Omega^{3q}_{\rm tot}([G\gto H])$. 
Note that the action of $h\in H$ on $K$ depends only on the class of $h$ in $C$. Since   $C$ is finite, it follows that, for any $x\in S^q(\k^*[3])$,  $\tilde{I}(x)$ is a cocycle if and only if  $x$ is $C$-invariant.  Let  $\check{I}(x)=\tilde{I}\big(\sum_{c\in C}  x^c\big)$.  
Then   $\check{I}(x)$ is indeed a cocycle and $\check{I}\big({\k^*}\big)$ generates the cohomology  $H^\com([G\gto[i] H])$.  
\end{remark}

\bigskip

 Let $1\gto A\gto G\gto[p] H\gto 1$ be a Lie group central extension.
 Since $A$ is central, there is a canonical action of $H$ on $G$.
 It is easy to see that $G\gto[p] H$ is a crossed module.
\begin{corollary}
Let $[G\gto[p] H]$ be the Lie $2$-group corresponding to a central extension of $H$ by a compact abelian group $A$. 
There is an  isomorphism of graded algebras
$$H^\com([G\gto[p] H]) \cong S^{}(\a^*[3])^\com$$ where $\a$ is the Lie algebra of $A$.
\end{corollary}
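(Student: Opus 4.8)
The plan is to deduce the corollary directly from Theorem~\ref{th:finitecokernel}. First I would observe that for a central extension $1\to A\to G\to[p] H\to 1$ with $A$ compact abelian, the crossed module $G\to[p] H$ (with the canonical $H$-action on $G$ coming from centrality of $A$, i.e.\ lifting conjugation in $H$) has kernel $\ker(p)=A$ and cokernel $H/p(G)=1$, which is trivial and in particular finite. Moreover the kernel $A$ is compact by hypothesis, so all the hypotheses of Theorem~\ref{th:finitecokernel} are satisfied with $C=1$ and $\k=\a$.

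Next I would simply substitute into the conclusion of Theorem~\ref{th:finitecokernel}: since $C$ is trivial, taking $C$-invariants does nothing, so
\[
H^\com([G\to[p] H]) \;\cong\; \left(S^{}\big(\a^*[3]\big)^\com\right)^{C} \;=\; S^{}\big(\a^*[3]\big)^\com
\]
as graded algebras. The only minor point to address is the slight notational discrepancy between $\a$ and $\a^*$: since $A$ is a compact abelian Lie group, its Lie algebra $\a$ is abelian and carries a canonical (up to scalar, basis-free) identification with its dual via, say, an invariant inner product, so $S(\a^*[3])\cong S(\a[3])$; alternatively one just keeps the statement as $S(\a^*[3])$. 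I would spell out that $(\a^*)^{\a}=\a^*$ because $\a$ is abelian, matching the appearance of $(\k^*)^{\k}$ in the proof of the theorem.

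There is essentially no obstacle here; the corollary is a specialization of the theorem, and the entire content is checking that a central extension gives a crossed module with trivial cokernel and kernel equal to $A$. The one thing worth verifying carefully is that the canonical $H$-action on $G$ used to make $G\to[p] H$ a crossed module is precisely the one for which the induced $C$-action (here trivial) on $\ker(p)=A$ is what appears in Theorem~\ref{th:finitecokernel}; since $C$ is trivial this is automatic, but I would note it for completeness. Thus the proof is: verify $[G\to[p]H]$ is a crossed module with $\ker(i)=A$ compact and $\coker=1$ finite, apply Theorem~\ref{th:finitecokernel}, and simplify the $C$-invariants for $C=1$.
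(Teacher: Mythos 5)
Your proposal is correct and follows essentially the same route as the paper: the paper's proof likewise observes that $p$ being a surjective submersion makes the cokernel $H/p(G)\cong\{*\}$ trivial and the kernel equal to $[A\to 1]$, and then invokes Theorem~\ref{th:finitecokernel}. Your extra remark about the harmless $\a$ versus $\a^*$ discrepancy (the theorem yields $S(\a^*[3])$, the corollary is stated with $S(\a[3])$) is a fair observation but does not change the argument.
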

Recall that  $S^{}(\a^*[3])^\com$ is a graded commutative algebra generated by degree 3 generators (given by any basis of $\a^*$).  
\begin{proof}
Since $G\gto[p] H$ is a surjective submersion, the cokernel $H/p(G)\cong \{*\}$ is trivial. Moreover the kernel of $[G\gto[p] H]$ is $[A\gto 1]$. 
Hence the conclusion follows from Theorem~\ref{th:finitecokernel}. 
\end{proof}
\begin{remark}\label{rem:centralextension}
Identifying the crossed module $A\gto 1$ with the kernel of $G\gto[p] H$ yields a canonical morphism of $2$-groups $\rho:[A\gto 1]\gto[] [G\gto H]$. It follows from the proof of Theorem~\ref{th:finitecokernel} that the isomorphism $H^\com([G\gto[p] H]) \stackrel{\sim}\gto  S^{}(\a[3])^\com$ is given by the composition
$$S^{}(\a[3])^\com\stackrel{\sim} \longrightarrow H^\com([A\gto 1]) \mathop{\longrightarrow}\limits^{\rho^*}_{\sim} H^\com([G\gto[p] H]). $$
\end{remark}

\begin{example}
Let $G$ be a compact Lie group. It is isomorphic to a quotient of $Z\times G'$ by a central finite subgroup. Here $G'$ is the commutator subgroup of $G$.
 Hence there is a map $G\gto \Aut(G')$ yielding a Lie 2-group $[G\gto \Aut(G')]$ through the action of $\Aut(G')$ on $G'$ (see Section~\ref{sec:GAUTG} below). 
Theorem~\ref{th:finitecokernel} implies that
$$H^{\com}([G\gto \Aut(G')]) \cong S^{}\big((\g^*)^{\g}[3]\big)^\com.$$  
\end{example}

\section{The case of a connected compact cokernel}

The results of Section~\ref{sec:cohomology} can
be applied to a more general type   of 2-groups $[G\gto[i] H]$,
 where $G$ and  $H$ are  Fr\'echet Lie groups (thus possibly infinite dimensional).
 See~\cite{BCSS} for more details on Fr\'echet Lie 2-groups.
In such a case, instead of de Rham cohomology, singular cohomology with real coefficients
can be used.

We start with the following lemma.

\begin{lemma}\label{lm:LerayCompactCokernel}
Let $G$ and  $H$ be   Fr\'echet Lie groups.
Assume that $C=H/i(G)$ is a  connected compact Lie group,
 and  $\ker(i)$ is  compact. 
Then the third page $L_3^{\com,\com}$ of the Leray spectral sequence~\eqref{eq:Leray} is concentrated in bidegree $(p,3q)$ ($p,q\geqslant 0$),
 and 
\eq \label{eq:LerayCompactCokernel} L_4^{p,3q}=H^p(BC) \otimes  S^{q}(\a^*[3]).  \eneq
Here $BC$ is the classifying space of $C=H/i(G)$,
 and $\a$ is  the Lie algebra of $A=\ker(i)$.  
\end{lemma}
Note that, since $S(\a^*[3])$ is a graded commutative algebra\footnote{$S(\a^*[3])$ 
is in fact an exterior algebra, since it is generated by odd degree generators.}
generated by elements of degree 3,
  it  lies in degree $3q$ (where $0\leq q\leq \dim(\a)$).

\begin{proof}
 Note that $C$ is the  cokernel $[1\gto[] C]$ of $[G\gto[i] H]$ 
(see Section~\ref{sec:Crossed}). Since $C=H/i(G)$ is  connected,
its classfying space  $BC$ is simply connected.  
It follows that the $L_2^{i, j}$ term of the Leray spectral sequence 
in Lemma~\ref{lm:Leray} is isomorphic to
$$L_2^{i,j}\cong H^i(BC) \otimes H^j([A\gto[] 1] )$$ as an algebra.
By Proposition~\ref{lm:A}, $ H^\com([A\gto[] 1] )\cong S(\a^*[3])^\com$ is concentrated in degree $3q$ ($q\geq 0$).
 Since the differential $d_2:L_2^{i,j}\gto L_2^{i+2,j-1}$ is a derivation, it follows that $d_2=0$ for degree reason. Similarly, $d_3=0$. 
Thus $L_4^{\com,\com}\cong L_3^{\com,\com}\cong L_2^{\com,\com}$.  
\end{proof}

The (higher) differential $d_4:L_4^{i,j}\gto L_4^{i+4,j-3}$ induces 
a transgression homomorphism \eq \label{eq:Tr} 
T_{}: \a^*\cong L_4^{0,3} \gto[d_4] L_4^{4,0}\cong H^4(BC).
\eneq 

\begin{proposition}\label{prop:Tr} 
Under the same hypothesis as in Lemma \ref{lm:LerayCompactCokernel}, there is a natural linear isomorphism $$H^\com([G\gto[i] H]) \cong \left(H^\com\big(BC\big){/(\im(T_{})}\right)
\otimes S^{}\big(\ker(T)[3]\big)^\com $$ which is an algebra isomorphism if we assume
furthermore that  $C=H/i(G)$  is  simply connected. 

 Here $BC$ is the classifying space of $C$ and $\im T$ is
 the ideal generated by the image of $T$.
\end{proposition}
\begin{proof}
 Since $d_4:L_4^{i,j}\gto L_4^{i+4,j-3}$ is a derivation, it is 
uniquely determined by $T$. 
From Lemma~\ref{lm:LerayCompactCokernel},
it  follows that
 $$L_5^{\com,\com}\cong \left(H^\com\big(BC\big){/(\im T)}\right)\otimes 
S^{}\big(\ker(T)[3]\big)^\com.$$ 
For degree reasons, $d_{r}=0$ for all $r\geqslant 5$. 
Thus $L_5^{\com,\com}\cong L_\infty^{\com,\com}$ as an algebra and the linear isomorphism $H^\com([G\gto[i] H]) \cong \left(H^\com\big(BC\big){/(\im(T_{})}\right)
\otimes S^{}\big(\ker(T)[3]\big)^\com $ follows since our ground ring is a field.
If $C$ is further simply connected, then $H^\com(BC)$ is a
 polynomial algebra with  generators
 $x_i$ of even degree $|x_i|=2i$, $i\geqslant 2$. 
In particular, $H^4(BC)$ has no decomposable elements, 
thus $L_\infty^{\com,\com}$ is a polynomial algebras with 
graded generators.
 It follows that $L_\infty^{\com,\com}\cong H^\com([G\gto[i] H]) $ as an algebra.
\end{proof}

 As an application,
below  we compute  the cohomology of \emph{string 2-group}~\cite{BCSS} $String(G)$.
 Let $G$ be   a connected and simply connected compact simple Lie group.
There is a unique left invariant closed $3$-form $\nu$ on $G$,
 which generates $H^3(G,\mathbb{Z})\cong \mathbb{Z}$. By transgression,
 the form $\nu$ corresponds to  a class $[\omega]\in H^4(BG, \mathbb{Z})$,
 which determines the basic central extension~\cite{PS, BCSS} $$1\gto S^1 \gto \widetilde{\Omega G}\gto[\tilde{p}] \Omega G\gto 1$$ of the  based (at identity) loop group  $\Omega G$ of $G$. 
Associated to $\nu$ is a (homotopy class of) map $\Omega B G \gto G \gto K(\Z,3)\cong [S^1\gto 1]$ which induces an isomorphism on $\pi_3$. Let $PG$ denote
 the space of paths $f:[0,1]\gto G$ starting at the identity. 
The conjugation action of $PG$ on $\Omega G$  lifts to $ \widetilde{\Omega G}$. 
The string 2-group (see~\cite{BCSS}) is the Fr\'echet $2$-group corresponding to the crossed module 
$$String(G):=[\widetilde{\Omega G} \gto[p] PG],$$
 where $p$  is the composition $$p: \widetilde{\Omega G}\gto[\tilde{p}] \Omega G \hookrightarrow  PG.$$ 
By construction, $\ker(p)\cong S^1$,  $PG/p(\widetilde{\Omega G}) \cong G$  and, also $\pi_3(String(G))=0$ (as follows from~\cite[Theorem 3]{BCSS}).  
Recall that the cohomology $H^\com(G)$ is the exterior algebra on generators $x_1,\dots, x_r$,
 where $x_i$ is of degree $2e_i+1$ and $e_1,\dots, e_r$ 
are the exponents of $G$. 
Note that we can choose $x_1=\nu$. Similarly $H^\com(BG)$ is the polynomial algebra on generators $y_1,\dots,y_r$ of degree $|y_i|=2e_i$,
 where $y_1$ can be taken to be $[\omega]$. 
To apply Proposition~\ref{prop:Tr}, it suffices to compute the transgression
homomorphism $T:\mathbb{R}\gto H^4(BG)\cong \mathbb{R}$, where the domain
 $\mathbb{R}$   is identified with the Lie algebra of $S^1$.
 Since $[\omega]\in H^4(BG)$ is obtained by  the
transgression from $[\nu]\in H^3(G)\cong H^3(N\Omega G)$, it follows
 that $T(1)$ is the generator of $H^4(BG)$. 
Indeed, there is a  commutative diagram of Fr\'echet 2-groups fibrations
$$\xymatrix{&[1\gto \Omega G] \ar[r] \ar[d]_{j} &[1\gto PG] \ar[r]^{ev} \ar[d] & [1\gto G] \ar@{=}[d] \\ [S^1\gto 1] \ar[r] & [\widetilde{\Omega G}\gto[\tilde{p}] \Omega G ] \ar[r]  & String(G) \ar[r]^{ev}  & [1\gto G] } $$ where the right horizontal arrows are induced by $ev: PG\gto[]  G$, $f\mapsto f(1)$ and the canonical inclusion $[S^1\gto[] 1] \gto[] [\widetilde{\Omega G}\gto[\tilde{p}] \Omega G ]=\ker(ev)$ is an equivalence of Fr\'echet $2$-groups. 
Thus the transgression map $T$ is the composition $T'\circ j^*$ where $T':H^3(G)\cong H^3([1\gto \Omega G])\gto H^4([1\gto G])\cong H^4(BG)$ is the transgression map associated to the fibration $[1\gto PG]\gto[ev] [1\gto G]$. 
Since $PG$ is contractible, $T'(\nu)$ is a generator\footnote{as for the case of the ''universal'' fibration $G\gto EG\gto BG$} of $H^4(BG)$. 
It also follows from the exact sequence~\eqref{eq:homotopygroups} that $j$ is an isomorphism on $\pi_3$ and so is $$j^*:\R\cong H^3([S^1\gto 1])\gto H^3([1\gto \Omega G])\cong H^3(B\Omega G)\cong H^3(G).$$ 
Hence $T=T'\circ j*:\R\gto H^4(BG)\cong \R$ is an isomorphism.
 Thus, we recover the following result of  Baez-Stevenson~\cite{BaSt}:

\begin{proposition}
$$H^\com(String(G))\cong S(y_2,\dots,y_r) \cong H^\com(BG)/([\omega]), $$
 where the $y_i$s are the generators of $H^4(BG)$.
\end{proposition}

\section{The case of $[G\gto \Aut^+(G)]$ and $[G\gto \Aut(G)]$}\label{sec:GAUTG}

Let $G$ be a compact Lie group. 
There is a canonical morphism $G\gto[i] \Aut(G)$ given by  inner automorphisms which is also a crossed module.  
Since inner automorphisms are orientation preserving, we also have a crossed module $G\gto[i^+] \Aut^+(G)$ where $\Aut^+(G)$ is the group of orientation preserving automorphisms.

\medskip

Now, assume $G$ is a semi-simple Lie group.  Then both $\Out(G)$ and  $\Out^+(G)$ are finite groups.
 Moreover, $\ker(i)$ and $\ker(i^+)$ are finite too. Thus, by Theorem~\ref{th:finitecokernel}, we obtain
\begin{proposition}\label{ex:ssimple} Let $G$ be a semi-simple Lie group.
$$H^{n}([G\gto \Aut(G)]) \cong H^{n}([G\gto \Aut^+(G)]) \cong \left\{ \begin{array}{l} 0 \quad \mbox{ if } n>0, \\
\R \quad \mbox{ if } n=0.\end{array} \right.$$  
\end{proposition}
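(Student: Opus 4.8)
The plan is to deduce this directly from Theorem~\ref{th:finitecokernel}. The entire content is verifying that the hypotheses of that theorem are met for both crossed modules $G\to[i]\Aut(G)$ and $G\to[i^+]\Aut^+(G)$ when $G$ is semi-simple, and that the resulting algebra is trivial in positive degree. So first I would record the standard facts about semi-simple Lie groups: the center $Z(G)$ is finite (equivalently, the Lie algebra $\g$ has trivial center $\zg=0$), the group $\Out(G)=\Aut(G)/\Inn(G)$ is finite (it is a subgroup of the finite automorphism group of the Dynkin diagram, up to the finitely many choices coming from the center), and $\Out^+(G)$ is a subgroup of $\Out(G)$, hence also finite. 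Since $\ker(i)=Z(G)$ and $\ker(i^+)=Z(G)\cap(\text{preimage of }\Aut^+)$, both kernels are finite, hence compact; and the cokernels $\Out(G)$, $\Out^+(G)$ are finite. This is exactly the situation of Theorem~\ref{th:finitecokernel}.

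Next I would invoke Theorem~\ref{th:finitecokernel} to conclude
\[
H^\com([G\to\Aut(G)]) \cong \left(S^{}(\k^*[3])^\com\right)^C,
\]
where $\k$ is the Lie algebra of the (finite) kernel and $C$ the (finite) cokernel, and similarly for $\Aut^+(G)$. Since the kernel is a finite group, its Lie algebra $\k$ is zero. Therefore $S^{}(\k^*[3]) = S^{}(0) = \R$, concentrated in degree $0$, and the $C$-action on $\R$ is trivial, so $\left(S^{}(\k^*[3])^\com\right)^C \cong \R$ concentrated in degree $0$. This gives the stated formula for both $[G\to\Aut(G)]$ and $[G\to\Aut^+(G)]$: the cohomology is $\R$ in degree $0$ and vanishes in all positive degrees.

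There is essentially no obstacle here: the proof is a one-line consequence of Theorem~\ref{th:finitecokernel} once one observes that a finite kernel has zero Lie algebra. The only point deserving a remark is the input from Lie theory that $\Out(G)$ (equivalently $\Aut(G)/\Inn(G)$) is finite for semi-simple $G$ — this is classical (it embeds into the finite automorphism group of the root datum), and $\Out^+(G)$ is finite as a subgroup. If one wanted to be careful one could alternatively note that $(\g^*)^\g = \zg^* = 0$ directly since $\g$ is semi-simple, which is what makes the symmetric algebra collapse, and this is really the conceptual reason: semi-simple Lie groups have no invariant degree-three generators, so the only surviving class is the unit. Thus I would present the argument as: (1) semi-simple $\Rightarrow$ $\Out^{(+)}(G)$ and $\ker(i^{(+)})$ finite; (2) apply Theorem~\ref{th:finitecokernel}; (3) finite kernel has trivial Lie algebra so $S^{}(\k^*[3]) = \R$; (4) conclude.
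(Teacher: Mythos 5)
Your proposal is correct and follows essentially the same route as the paper: observe that for semi-simple $G$ both $\Out(G)$, $\Out^+(G)$ and the kernels (which equal $Z(G)$, finite) satisfy the hypotheses of Theorem~\ref{th:finitecokernel}, and then note that the finite kernel has trivial Lie algebra so $S^{}(\k^*[3])\cong\R$ concentrated in degree $0$. The paper's proof is exactly this appeal to Theorem~\ref{th:finitecokernel}, so no further comment is needed.
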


For general compact Lie groups, the cohomology of $[G\gto[i^+] \Aut^+(G)]$ and $[G\gto[i] \Aut^+(G)]$ can be computed with the help of spectral sequences. 
\begin{theorem}\label{th:main}
If $G$ is a  compact Lie group, there are  converging spectral sequences of graded commutative algebras
\eq \label{eq:E+}
{E_2^+}^{p,q} &=& H^p\big(SL(n,\Z), S^{}((\g^*)^\g[3])^q\big) \Longrightarrow H^{p+q}\big([G\gto[i^+] \Aut^+(G)])\\
 \label{eq:E} E_2^{p,q} &=& H^p\big(GL(n,\Z), S^{}((\g^*)^\g[3])^q\big) \Longrightarrow H^{p+q}\big([G\gto[i] \Aut(G)])
\eneq
where $n=\dim ({(\g^*)}^\g)$ is the dimension of $({\g^*})^\g$, 
and  the $SL(n,\Z)$-action (or $GL(n,\Z)$-action) on $S^{}((\g^*)^\g[3])^q$ is induced by the natural action on $({\g^*})^\g\cong \R^n$.

\smallskip

In particular the spectral sequences are concentrated in bidegrees $(p, 3k)$ ($p$ and $k\geq 0$) and 
\eq \label{eq:E0+}
{E_2^+}^{0,q\neq 0, 3n} \cong 0 & \mbox{ and } & {E_2^+}^{0,0} \cong {E_2^+}^{0,3n}\cong \R\\
\label{eq:E0} E_2^{0,q>0}\cong 0 &\mbox{ and } & E_2^{0,0}\cong \R
\eneq
\end{theorem}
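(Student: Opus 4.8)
The plan is to build the two spectral sequences as the Leray--Serre spectral sequences of suitable fibrations of $2$-groups, using the short exact sequence of crossed modules and Lemma~\ref{lm:Leray}, Remark~\ref{rm:Leray} and Lemma~\ref{lm:fibration}. First I would recall the structure of the crossed modules: for a connected compact Lie group $G$, the kernel of $G\to[i]\Aut(G)$ (resp. $G\to[i^+]\Aut^+(G)$) is the center $Z(G)$, which is a compact abelian group with Lie algebra $\zg\cong(\g^*)^\g{}^*$, while the cokernel is $\Out(G)$ (resp. $\Out^+(G)$). The key point about the cokernel is that $\Out(G)$ fits in an extension $1\to F\to \Out(G)\to GL(n,\Z)$ (and similarly $\Out^+(G)\to SL(n,\Z)$) where $F$ is finite, coming from the action of automorphisms on the lattice $\pi_1(Z(G)^0)\cong \Z^n$ inside $Z(G)^0\cong T^n$; more precisely one uses the maps of $2$-groups $[G\to\Aut(G)]\to[1\to\Out(G)]$ and the further quotient $[1\to\Out(G)]\to[1\to GL(n,\Z)]$, whose ``fiber'' contributes only finite group cohomology and hence nothing over $\R$.

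The heart of the argument is to apply the Leray--Serre spectral sequence (Remark~\ref{rm:Leray}) to the fibration
\[
[Z(G)\to 1]\longrightarrow [G\to[i]\Aut(G)]\longrightarrow [1\to \Out(G)],
\]
which is a fibration by Lemma~\ref{lm:fibration} (the maps $G\to\Aut(G)$ and $\Aut(G)\to\Out(G)$ being surjective submersions, the latter after composing appropriately). This gives
\[
L_2^{p,q}=H^p\big([1\to\Out(G)],\mathcal H^q([Z(G)\to 1])\big)\Longrightarrow H^{p+q}\big([G\to[i]\Aut(G)]\big).
\]
Now I would identify $\mathcal H^q([Z(G)\to 1])$: by Proposition~\ref{lm:A}, $H^\com([Z(G)^0\to 1])\cong S^{}(\zg^*[3])^\com$, and since $Z(G)/Z(G)^0$ is finite, $H^\com([Z(G)\to 1])\cong S^{}((\g^*)^\g[3])^\com$ as graded algebras (using $\zg^*\cong(\g^*)^\g$). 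As a local system on $[1\to\Out(G)]=B\Out(G)$, this is the local system associated to the $\Out(G)$-module $S^{}((\g^*)^\g[3])^q$ coming from the $\Out(G)$-action on $Z(G)$, hence on $\zg$, hence on $(\g^*)^\g$. Since $\Out(G)$ is discrete, $H^p(B\Out(G),\underline M)\cong H^p(\Out(G),M)$ is ordinary group cohomology. Then, because $F\subset\Out(G)$ is finite and acts on an $\R$-vector space, the Lyndon--Hochschild--Serre spectral sequence for $1\to F\to\Out(G)\to GL(n,\Z)\to 1$ collapses to give $H^p(\Out(G),M)\cong H^p(GL(n,\Z),M^F)$; since $F$ acts trivially after tensoring with $\R$ (or: its invariants recover all of $M$ because averaging is possible and in fact $F$ acts through a finite quotient that one checks acts trivially on $(\g^*)^\g$, or simply the contribution is absorbed), this yields $E_2^{p,q}=H^p(GL(n,\Z),S^{}((\g^*)^\g[3])^q)$, which is~\eqref{eq:E}; the orientation-preserving case~\eqref{eq:E+} is identical with $SL(n,\Z)$ replacing $GL(n,\Z)$. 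The multiplicativity of the Leray--Serre spectral sequence (the Remark after Lemma~\ref{lm:Leray}) gives the algebra structure, and the concentration of $S^{}((\g^*)^\g[3])^q$ in degrees $q=3k$ forces the spectral sequence into bidegrees $(p,3k)$.

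Finally, the statements~\eqref{eq:E0+} and~\eqref{eq:E0} about the edge $p=0$ are read off from $E_2^{0,q}=H^0(GL(n,\Z),S^{}((\g^*)^\g[3])^q)=\big(S^{}((\g^*)^\g[3])^q\big)^{GL(n,\Z)}$: the $GL(n,\Z)$-invariants of the symmetric algebra $S^{}(\R^n[3])$ — polynomial invariants of the standard integral representation — are just the constants in degree $0$, so $E_2^{0,0}\cong\R$ and $E_2^{0,q>0}=0$, since $GL(n,\Z)$ has no nonzero polynomial invariants on $\R^n$ (a nonzero homogeneous invariant polynomial would be $GL(n,\R)$-invariant by Zariski density, impossible for positive degree). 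For $SL(n,\Z)$ the invariants in the top symmetric power pick up the determinant-type class: $S^{}(\R^n[3])^{SL(n,\Z)}$ is spanned by $1$ in degree $0$ and by (a power of) the generator of $\Lambda^n$ — more precisely here one uses that $S^n$ of the standard rep contains no $SL(n,\Z)$-invariant but the relevant top class lives in degree $3n$ because of the degree shift $[3]$, giving ${E_2^+}^{0,3n}\cong\R$ and ${E_2^+}^{0,q}=0$ for $0<q\neq 3n$. The main obstacle I anticipate is the careful verification that the cokernel fibration is genuinely a fibration of $2$-groups in the sense of~\cite{Hen} and that the finite part $F$ of $\Out(G)$ truly contributes nothing over $\R$ — i.e. that one may replace $\Out(G)$ by $GL(n,\Z)$ cleanly — together with the precise determination of the $SL(n,\Z)$-invariants in the top symmetric power; the rest is a formal assembly of Lemma~\ref{lm:Leray}, Remark~\ref{rm:Leray}, Proposition~\ref{lm:A}, and standard properties of group cohomology.
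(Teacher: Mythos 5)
Your overall architecture is the same as the paper's: apply Lemma~\ref{lm:Leray} to the fibration $[Z(G)\to 1]\to[G\to[i]\Aut(G)]\to[1\to\Out(G)]$ of \eqref{eq:fibration}, identify the fiber cohomology via Proposition~\ref{lm:A}, and use discreteness of $\Out(G)$ to rewrite the $E_2$-term as group cohomology. The genuine gap is that the whole substance of the theorem lies in replacing $\Out(G)$ (resp.\ $\Out^+(G)$) by $GL(n,\Z)$ (resp.\ $SL(n,\Z)$), and you simply assert the extension $1\to F\to\Out(G)\to GL(n,\Z)\to 1$ with $F$ finite; neither the surjectivity onto $GL(n,\Z)$ nor the finiteness of the kernel is formal, and you explicitly defer both as ``the main obstacle I anticipate.'' This is precisely where the paper does its work: it first treats the split case $G\cong Z(G)\times G'$, where the section $\phi\mapsto\phi\times\id_{G'}$ of $\Out(G)\to\Out(Z(G))\cong GL(n,\Z)$ together with the invariance of the commutator subgroup gives $\Out(G)\cong GL(n,\Z)\times\Out(G')$ with $\Out(G')$ finite acting trivially on $H^\com([Z(G)\to 1])$, so that the K\"unneth formula (playing the role your Lyndon--Hochschild--Serre collapse would play, and equally valid once the structure of $\Out(G)$ is known) yields \eqref{eq:E+} and \eqref{eq:E}; it then reduces a general connected compact $G\cong(Z\times G')/\Delta$ to this case by uniquely lifting every automorphism of $G$ to the cover $Z\times\tilde{G}'$, using that the non-torsion part of $\pi_1(G)$ is preserved. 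Without some version of this structure theory your argument does not start; also, triviality of the $F$-action on $(\g^*)^\g$ deserves the one-line reason (it is the kernel of the map to $\Aut(Z(G)_0)\cong GL(n,\Z)$, through which the action on $(\g^*)^\g$ factors) rather than ``the contribution is absorbed.''

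The treatment of the $p=0$ column is also flawed as written. Since the generators of $S^{}((\g^*)^\g[3])$ sit in odd degree $3$, this graded symmetric algebra is an \emph{exterior} algebra: $S^{}((\g^*)^\g[3])^{3k}\cong\Lambda^k(\R^n)$ and zero otherwise, so in particular it vanishes above degree $3n$. Your argument instead computes polynomial invariants and invokes Zariski density of $GL(n,\Z)$ in $GL(n,\R)$, which is false (its Zariski closure is the subgroup of determinant $\pm1$); worse, with the polynomial reading the claims \eqref{eq:E0+}--\eqref{eq:E0} fail outright for $n=1$, where $SL(1,\Z)=\{1\}$ fixes everything and the $GL(1,\Z)$-invariant polynomials are nonzero in all even degrees, and the vanishing of ${E_2^+}^{0,q}$ for $q>3n$ is invisible for any $n$. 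The paper's (and the correct) argument is: $SL(n,\Z)$ and $GL(n,\Z)$ act trivially on $\Lambda^0$, by the determinant on $\Lambda^n(\R^n)\cong\R$, and have no nonzero fixed vectors in $\Lambda^k(\R^n)$ for $0<k<n$; this gives ${E_2^+}^{0,0}\cong{E_2^+}^{0,3n}\cong\R$ with the rest of that column zero, and $E_2^{0,q>0}\cong 0$. Your closing mention of ``the top symmetric power'' and ``a power of the generator of $\Lambda^n$'' gestures at the right answer but is not a proof, since a polynomial algebra has no top power and no power higher than the first of the generator of $\Lambda^n(\R^n)$ survives in the exterior algebra.
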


\begin{proof}
Let $\g$ be the Lie algebra of $G$ and $\zg$ the Lie algebra of its center $Z(G)$.
Then $\zg^*=(\g^*)^{\g}$.
Since the kernel of $G\gto[i] \Aut(G)$ is  $Z(G)$,  we have the fibration
\eq
\label{eq:fibration} [Z(G)\rto 1] \stackrel{j}\longrightarrow [G\rto[i] \Aut(G)]  \longrightarrow [1\rto \Out(G)], 
\eneq
where $j$ is the inclusion map. 
 By Lemma~\ref{lm:Leray}, we have a spectral sequence $H^p([1\gto \Out(G)], \mathcal{H}^q([Z(G) \rto 1])) \Longrightarrow  H^{p+q}([G\rto[i] \Aut(G)])$ and similarly for $[G\gto[i^+] \Aut^+(G)]$. 
According to  Proposition~\ref{lm:A}, $H^q([Z(G)\gto 1])\cong S^{}((\g^*)^\g[3])^q$. 
Since $G$ is compact, the group $\Out(G)$  is discrete. 
Thus,  the  $E_2^{p,q}$ and ${E_2^+}^{p,q}$-terms of the spectral sequences become the group cohomology groups
$H^p(\Out(G), S^{}((\g^*)^\g[3])^q)$ and $H^p(\Out^+(G), S^{}((\g^*)^\g[3])^q)$ respectively. 
Note that the center of $G$ is stable under the action by any automorphism. 
Hence, there are  canonical group morphisms $\Out(G)\gto \Out(Z(G))$ and $\Out^+(G) \gto \Out^+(Z(G))$.

\smallskip

First assume that $G\cong Z(G)\times G'$, where $Z(G)=S^1\times \cdots \times S^1$ is a torus of dimension $n$ and $G'=[G,G]$ is semi-simple. 
Then the canonical map $\Out(G)\gto \Out(Z(G))$ has an obvious section $\Out(Z(G))\gto \Out(G)$   given by $\phi \mapsto \phi \times \id_{G'}$.
Since $G'$ is the commutator subgroup of $G$, it is also stable under automorphisms.
It follows that $\Out(G)\cong GL(n,\Z)\times \Out(G')$ and $\Out^+(G)\cong SL(n,\Z)\times \Out^+(G')$ since $\Aut(Z(G))\cong GL(n,\Z)$. 
We now need to find out the $\Out(G)$ and $\Out^+(G)$-actions  on $H^\com([Z(G)\gto 1])\cong S^{} ((\g^*)^\g[3])^q$. 
If $t_1,\dots, t_n$ are coordinates on $Z(G)$, then $(\g^*)^\g\cong \R d t_1\oplus \dots \oplus \R d t_n$ and, according to Proposition~\ref{lm:A}, the elements  $I(d t_1)$, $\dots$, $I(d t_n) \in \Omega^1(S^1\times \cdots \times S^1)\cong \Omega^3([Z(G)\gto 1])$ form a basis of $H^\com([Z(G)\gto 1])$. It follows that the $\Out(G)$ and $\Out^+(G)$-actions  on $H^\com([Z(G)\gto 1])$ reduce to the standard   $GL(n,\Z)$ and $SL(n,\Z)$-actions on the vector space $\R d t_1\oplus \dots \oplus \R d t_n$. 
Since $\Out(G')$ and $\Out^+(G')$ are finite and act trivially on  $H^\com([Z(G)\gto 1])$, the spectral sequences~\ref{eq:E+} and~\ref{eq:E} follow from the K\"unneth formula.  

\smallskip

In general, since $G$ is compact, it is isomorphic to the quotient $(G\cong Z\times G')/\Delta$, where $Z$ is the connected component of the center $Z(G)$ and $\Delta=Z(G)\cap G'$ is finite central. 
Let $\tilde{G}'$ be the universal cover of $G'$, which is a compact Lie group, and $p:Z\times \tilde{G}'\gto G$ be the covering of $G$ given by the composition $Z\times \tilde{G}'\gto Z\times G'\gto G$. 
Let $f\in \Aut(G)$, then $f\circ p: Z\times \tilde{G}'\gto G$ is a Lie group morphism. 
There is a unique lift 
\eqn
\xymatrix@C=28pt{Z\times \tilde{G}' \ar@{.>}[r]^{\tilde{f}} \ar[d]_{p}& Z\times \tilde{G}' \ar[d]^{p}\\ G \ar[r]^{f} & G}
\eneqn
of the map $f\circ p: Z\times \tilde{G}'\gto G$ into a map $\tilde{f}: Z\times \tilde{G}'\gto Z\times \tilde{G}'$ preserving the unit. 
Indeed, for this, it is sufficient to check that   $\big(f_*\circ p_*\big)(\pi_1(Z\times \tilde{G}')) \subset p_*(\pi_1(Z\times \tilde{G}'))$. 
  Clearly $p_*(\pi_1(Z\times  \tilde{G}'))\cong p_*(\Z^n)$ is the non-torsion part of $\pi_1(G)$. 
It is thus stable by any automorphism, therefore by $f_*:\pi_1(G)\gto \pi_1(G)$. 
Since $p$ is a group morphism and $\tilde{f}\in \Aut(Z\times \tilde{G}')$,  it follows that any automorphism of $G$ lifts uniquely into an automorphism of $Z\times \tilde{G}'$. 
We are thus back to the previous case.

\medskip

By the above discussions, we already know that the action of $SL(n,\Z)$ and $GL(n,\Z)$ on $(\g^*)^\g\cong \R^n$ is the standard one. 
Since the symmetric algebra on odd generators is isomorphic to an exterior algebra,  $E_2^{0,q}$ and ${E_2^+}^{0,q}$ are respectively isomorphic to $\Lambda^k \big( \R^n\big)^{GL(n,\Z)}$ (as  a $GL(n,\Z)$-module) and $\Lambda^k\big( \R^n\big)^{SL(n,\Z)}$ (as a $SL(n,\Z)$-module). 
Furthermore, if $q\neq 3k$,  $E_2^{0,q}$ and ${E_2^+}^{0,q}$ vanish for degree reasons.
 In particular, the  $GL(n,\Z)$-action is trivial for $k=0$ and, for $k=n$, it reduces to the multiplication by the determinant on $\Lambda^n\big( \R^n\big)\cong \R$.
 For $0<k<n$, $SL(n,\Z)$ (and thus $GL(n,\Z)$ too) has no fixed points in $\Lambda^k\big( \R^n\big)$ besides $0$. The last assertion follows.
\end{proof}

In general, the description of the group cohomology of $GL(n,\Z)$ and $SL(n,\Z)$ with arbitrary coefficients for general $n$ is still an open question unless $n\leq 4$ (for instance see~\cite{Sou, Hor}). 
\begin{corollary} Let $G$ be a compact Lie group. Assume that $n=\dim(\g^\g)\leq 3$. 
$$H^{p}([G\gto \Aut^+(G)])\cong \left\{\begin{array}{l} \R \quad \mbox{ if } p=0,\,3n \\
0 \quad \mbox{ otherwise}.\end{array} \right. $$ $$H^p([G\gto \Aut(G)])\cong \left\{\begin{array}{l} \R \quad \mbox{ if } p=0\\
0 \quad \mbox{ if } p>0.\end{array} \right.
$$
\end{corollary}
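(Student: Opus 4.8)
The plan is to feed the numerical constraint $n=\dim(\g^\g)\le 3$ into the two spectral sequences of Theorem~\ref{th:main} and check that in each case there is essentially no room for nontrivial differentials or extension problems, so that the abutment is read off directly from the $E_2$-page. Recall from Theorem~\ref{th:main} that both spectral sequences are concentrated in bidegrees $(p,3k)$ with $k\ge 0$, and that the $q$-direction is controlled by $S^{}((\g^*)^\g[3])^q\cong \Lambda^k(\R^n)$ (an exterior algebra on $n$ odd generators, since the generators sit in odd degree $3$), placed in total degree $q=3k$. Thus the only nonzero rows are $q=0,3,6,\dots,3n$, and the column $q=3n$ is one-dimensional while $q=0$ is $\R$. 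The point of the hypothesis $n\le 3$ is that $\Lambda^k(\R^n)$ as a $GL(n,\Z)$- or $SL(n,\Z)$-module is, for each of the finitely many values $k=0,\dots,n\le 3$, either the trivial module, the determinant (sign) module, or has no invariants and—crucially—controllable higher group cohomology.

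\textbf{Key steps.} First I would treat the $GL(n,\Z)$ case, Equation~\eqref{eq:E}. The rows are indexed by $k=0,\dots,n$. For $k=0$ the coefficient module is the trivial module $\R$, contributing $H^p(GL(n,\Z),\R)$; for $1\le k\le n-1$ the module $\Lambda^k(\R^n)$ has no $SL(n,\Z)$-invariants (already noted in the proof of Theorem~\ref{th:main}), so by a transfer/averaging argument over the finite group $SL(n,\Z)\backslash GL(n,\Z)$ its $GL(n,\Z)$-cohomology over $\R$ vanishes in all degrees once $SL(n,\Z)$-cohomology over $\R$ with these coefficients vanishes; for $k=n$ the module is the sign (determinant) representation $\R_{\det}$. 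The content of Soul\'e's computation~\cite{Sou} (valid for $n\le 3$, indeed $n\le 4$) is precisely that $H^{>0}(SL(n,\Z),\R)=0$ and $H^{\com}(SL(n,\Z),\Lambda^k\R^n\otimes\R)=0$ in the relevant range, and likewise $H^{\com}(GL(n,\Z),\R_{\det})=0$ except that the total-degree-$3n$ class survives. Assembling: the only surviving entries of $E_2$ are $(0,0)$, giving $\R$ in total degree $0$. Hence $H^p([G\to\Aut(G)])=\R$ for $p=0$ and $0$ for $p>0$; there are no differentials or extensions to worry about because only one bidegree remains.

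\textbf{The $SL(n,\Z)$ case.} For Equation~\eqref{eq:E+} the analysis is the same except that the top row $k=n$ now carries the \emph{trivial} $SL(n,\Z)$-module $\Lambda^n(\R^n)\cong\R$ (the determinant acts by $1$ on $SL$), so $E_2^{+\,0,3n}\cong\R$ survives and, by Soul\'e's vanishing $H^{>0}(SL(n,\Z),\R)=0$ together with the vanishing $H^{\com}(SL(n,\Z),\Lambda^k\R^n)=0$ for $0<k<n$, these are the only two surviving entries, sitting in total degrees $0$ and $3n$. Since they occupy distinct total degrees there is no possible differential between them (differentials change total degree by $1$), and there is no extension problem, so $H^{p}([G\to\Aut^+(G)])\cong\R$ for $p=0,3n$ and $0$ otherwise. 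One should also record the degenerate instances: if $n=0$ then $\g^\g=0$, $G$ is semisimple, and the statement reduces to Proposition~\ref{ex:ssimple} (the two cases $p=0$ and $p=3n$ coincide); the formulas remain correct.

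\textbf{Main obstacle.} The genuine mathematical input, and the only step that is not a formal manipulation of the spectral sequence, is the vanishing of $H^{\com}(SL(n,\Z),-)$ and $H^{\com}(GL(n,\Z),-)$ with the coefficient modules $\Lambda^k(\R^n)$ for $n\le 3$; this is exactly where the hypothesis $n\le 3$ is used and where one invokes Soul\'e's results~\cite{Sou} (one must check his computations cover not just trivial coefficients but the exterior-power coefficients, which for small $n$ they do, or reduce the exterior powers to trivial/sign modules via the low-dimensional accidental isomorphisms). Everything else—identifying the $E_2$-page via Theorem~\ref{th:main}, observing the rows are spaced three apart in total degree, and checking that the two potentially-surviving classes cannot interact—is routine.
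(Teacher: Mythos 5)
Your formal handling of the spectral sequences of Theorem~\ref{th:main} (rows only in degrees $q=3k$, no differentials or extension problems once only bidegrees $(0,0)$ and $(0,3n)$ survive, transfer from $SL(n,\Z)$ to its index-two overgroup $GL(n,\Z)$ over $\R$, and the degenerate cases $n=0,1$ via Proposition~\ref{ex:ssimple}) is correct and matches the paper's strategy. But there is a genuine gap at the one step you yourself flag as the ``main obstacle'': the vanishing of $H^{\com}\big(SL(n,\Z),\Lambda^k(\R^n)\big)$ for $0<k<n$ is \emph{not} something you can quote from \cite{Sou}. Soul\'e's paper computes the cohomology of $SL_3(\Z)$ with constant coefficients only; it does not treat the standard or dual-standard representations. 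Your proposed fallback, reducing the exterior powers ``to trivial/sign modules via low-dimensional accidental isomorphisms,'' fails: for $n=3$ the modules $\Lambda^1\R^3$ and $\Lambda^2\R^3$ are the standard and (determinant-twisted) dual-standard representations, and for $n=2$ the module $\Lambda^1\R^2$ is the standard representation --- none of these is trivial or sign. So the decisive coefficient computation is simply asserted in your write-up, not proved.

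This computation is precisely where the paper does its real work. For $n=2$ it uses the decomposition $SL(2,\Z)\cong \Z/4\Z *_{\Z/2\Z}\Z/6\Z$ acting on Serre's tree \cite{Se/Trees}: the resulting Mayer--Vietoris sequence, together with the vanishing of positive-degree real cohomology of finite groups, kills everything in positive degrees for an arbitrary real coefficient module (and the surviving $p=0$ column is the one already recorded in \eqref{eq:E0+}). For $n=3$ it runs the equivariant-cohomology spectral sequence of \cite{Bro}, Section VII.7, for the $SL(3,\Z)$-action on Soul\'e's space $X_3$: $E_1^{i,j}=\bigoplus_{\sigma\in\Sigma_i}H^j\big(SL(3,\Z)_\sigma,M^q_\sigma\big)$ with coefficients twisted by the orientation characters of the finite cell stabilizers, which are read off from Theorem~2 of \cite{Sou}; one then inspects the invariants $\big(M^q_\sigma\big)^{SL(3,\Z)_\sigma}$ cell by cell and analyzes the differential $d_1$ (induced by inclusions of stabilizer invariants into the modules of the $3$-cells) to conclude that everything cancels, i.e.\ $H^{\com}\big(SL(3,\Z),\Lambda^{1}\R^3\big)=H^{\com}\big(SL(3,\Z),\Lambda^{2}\R^3\big)=0$. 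To complete your argument you would need to either reproduce this stabilizer-by-stabilizer computation (or an equivalent one, e.g.\ for $n=2$ one can note that $-I$ acts by $-1$ on $\R^2$ so the real cohomology with these coefficients vanishes), or cite a source that actually covers these nontrivial coefficient systems; labelling it ``routine'' or attributing it to \cite{Sou} as stated does not close the gap.
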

\begin{proof}
If $n=0$, it reduces to Proposition~\ref{ex:ssimple}.
For $n=1$, $GL(1,\Z)\cong \Z/2\Z$ and $SL(1,\Z)=\{1\}$. 
The spectral sequences of Theorem~\ref{th:main} are concentrated in bidegrees $(0,0)$ and $(0,3)$, hence collapse. 

\smallskip

For $n=2$, $SL(2,Z)$ is an amalgamated sum  $\Z/4\Z*_{\Z/2\Z}\Z/6\Z$ over a tree~\cite{Se/Trees}. 
For any $SL(2,\Z))$-module $M$, the action of $SL(2,\Z)$ on this tree yields an exact sequence
$$\dots  \gto  H^i(\Z/4\Z,M) \oplus H^i(\Z/6\Z,M) \gto H^i(\Z/2\Z,M)\gto H^{i+1}(SL(2,\Z),M)\gto \dots$$
Since the cohomology of a finite group acting on an $\R$-vector space vanishes in positive degrees,
 the only non trivial terms in the spectral sequence ${E_2^+}^{p,q}$ are for $p=0$. 
It follows that the spectral sequence collapses and the result is given by Equation~\eqref{eq:E0+} in Theorem~\ref{th:main}.
 A similar computation gives the result for $GL(2,\Z)\cong SL(2,\Z)\rtimes \Z/2\Z$. 

\smallskip

For $n=3$, one uses the fundamental domain  introduced by Soul\'e in~\cite{Sou}.
 Let $M^q$ be the $SL(3,\Z)$-module $S^{q}((\g^*)^\g[3])\cong \Lambda^q(\R^3)$ ($q=0\dots 3$).
 Since $M^0$ and $M^3$ are isomorphic to $\R$ with trivial action, the  groups $H^{p>0}(SL(3,\Z),M^q)$ are trivial for $q=0,3$. Now assume $q=1$ or $q=2$. 
The group $SL(3,\Z)$ acts by conjugation on the projective space of symmetric positive definite $3\times 3$ matrices. 
Let $D_3$ be the subset of such matrices whose diagonal coefficients are all the same. 
The orbit $X_3=D_3\cdot SL(3,\Z)$ of $D_3$ under $SL(3,\Z)$ is a homotopically trivial triangulated space of dimension 3~\cite{Sou}. Let $\Sigma_i$ be the set of equivalence classes of cells of dimension $i$ modulo the $SL(3,\Z)$-action.
 For  $\sigma \in \Sigma_i$, we denote by $SL(3,\Z)_\sigma$ the stabilizer of the cell $\sigma$ and $M^q_\sigma$ for $M^q$ endowed with the induced action of $SL(3,\Z)_\sigma$ twisted by the orientation character $SL(3,\Z)_\sigma\gto \{\pm 1\}$.  
There  is a spectral sequence $E_1^{i,j}=\bigoplus_{\sigma \in \Sigma_i} H^j\big(SL(3,\Z)_\sigma,M^q_\sigma\big)$ converging to $H^{i+j}\big(SL(3,\Z),M^q \big)$ (see~\cite{Bro} Section VII.7). 
The stabilizers $SL(3,\Z)_\sigma$ are described in~\cite{Sou} Theorem 2.
 They are all finite. 
Thus the spectral sequence reduces to $E_1^{i,0}=\bigoplus_{\sigma \in \Sigma_i} \big(M^q_\sigma\big)^{SL(3,\Z)_\sigma}$.
 Direct inspection using Theorem 2 in~\cite{Sou}  shows that  $E_1^{i\leq 1,0}=0$, $E_1^{3,0}\cong \big(M^q\big)^{4}$ and $$E_2^{2,0}\cong \big(M^q\big)^{4}\oplus \big({M^q}^{A}\big)^{3}\oplus  \big({M^q}^{B}\big)\oplus \big({M^q}^{C}\big)^{2}$$
 where $A$, $B$, $C$ are respectively the matrices {\small$$\left(\begin{array}{ccc} 0 & -1 & 0\\ -1 & 0 & 0\\ 0 &0 & -1 \end{array}\right), \quad \left(\begin{array}{ccc} -1 & 0 & 0\\ 0 & 0 & -1\\ 0 &-1 & 0 \end{array}\right),\quad \left(\begin{array}{ccc} -1 & 0 & 0\\ 0 & 0 & 1\\ 0 &1 & 0 \end{array}\right).$$}
The term $d_1$ of the spectral sequences is described in~\cite{Bro} Section VII.8. 
In our case, since the stabilizers of cells of dimension $3$ are trivial, the differential $d_1$ is induced by the inclusions $\big(M_\sigma^q\big)^{SL(3,\Z)_\sigma} \hookrightarrow M_\tau^q$ for  each 3-dimensional cell $\tau \in \Sigma_3$ with $\sigma \subset \tau$ a subface of dimension 2. 
It follows that $E_2^{i,j}\cong 0$. Hence the result follows for $[G\gto \Aut^+(G)]$. 
The case for $[G\gto \Aut(G)]$ follows  using the K\"unneth formula since $GL(3,\Z)\cong SL(3,\Z)\times \Z/2\Z$.  
\end{proof}

\begin{remark}
For $n=\dim((\g^*)^\g)=4$, it should be possible to compute explicitly $H^\com([G\gto \Aut^+(G)])$ and $H^\com([G\gto \Aut(G)])$ using Theorem~\ref{th:main} and the techniques and results of~\cite{Hor}. 
For $n=5,6$, the results of~\cite{EGS} suggest that the cohomology  groups $H^\com([G\gto \Aut^+(G)])$  and  $H^\com([G\gto \Aut(G)])$ should be non trivial. 
For larger $n$, it seems a difficult question to describe explicitly the spectral sequences of Theorem~\ref{th:main}.
\end{remark}

\end{document}